\newcommand{\lab}[1]{\label{#1}}                
\newcommand{\remove}[1]{}
\newcommand\eqn[1]{(\ref{#1})}
\newcommand{\be}{\begin{equation}}
\newcommand{\bel}[1]{\begin{equation}\lab{#1}\ }
\newcommand{\ee}{\end{equation}}
\newcommand{\bea}{\begin{eqnarray}}
\newcommand{\eea}{\end{eqnarray}}
\newcommand{\bean}{\begin{eqnarray*}}
\newcommand{\eean}{\end{eqnarray*}}
\newtheorem{thm}{Theorem}
\newtheorem{cor}[thm]{Corollary}
\newtheorem{lemma}[thm]{Lemma}
\newtheorem{definition}[thm]{Definition}
\newtheorem{claim}[thm]{Claim}
\newtheorem{observation}[thm]{Observation}
\newcommand{\floor}[1]{\left\lfloor#1\right\rfloor}
\newcommand{\bm}[1]{{\pmb #1}}
\def\time{{\tau_{{\tt HAM}}}}
\def\cyclic{{\tt C}_{\tt cyclic}}
\def\G{{\mathcal G}}
\def\P{{\mathcal P}}
\newcommand{\bbN}{\mathbb{N}}
\newcommand{\scr}{\mathcal}
\def\ex{{\mathbb E}}
\def\pr{{\mathbb P}}
\def\bbN{{\mathbb N}}
\def\eps{\epsilon}
\date{}
\title{Hamilton Cycles in the Semi-random Graph Process}
\author{Pu Gao\thanks{The first, the third, and the last authors are partially supported by NSERC.} \\
\small{Department of Combinatorics and Optimization\\
University of Waterloo, Waterloo, Canada \\
\texttt{pu.gao@uwaterloo.ca}
}
\and
Bogumi\l{} Kami\'{n}ski \\ 
\small{
Decision Analysis and Support Unit \\
SGH Warsaw School of Economics, Warsaw, Poland\\
\texttt{bkamins@sgh.waw.pl}
}
\and
Calum MacRury$^*$ \\
\small{Department of Computer Science\\ University of Toronto, Toronto, Canada\\
\texttt{calum.macrury@gmail.com}
}
\and
Pawe\l{} Pra\l{}at$^*$ \\ 
\small{Department of Mathematics\\ Ryerson University, Toronto, Canada\\
\texttt{pralat@ryerson.ca}
}
}
\begin{document}
\maketitle

\begin{abstract}
The semi-random graph process is a single player game in which the
player is initially presented an empty graph on $n$ vertices. In each
round, a vertex $u$ is presented to the player independently and uniformly
at random. The player then adaptively selects a vertex $v$, and adds
the edge $uv$ to the graph. For a fixed monotone graph property, 
the objective of the player is to force the graph to satisfy this property
with high probability in as few rounds as possible.

We focus on the problem of constructing a Hamilton cycle in as few rounds
as possible. In particular, we present a novel strategy for the player which achieves
a Hamiltonian cycle in $(2+4e^{-2}+0.07+o(1)) \, n < 2.61135 \, n$ rounds, assuming that a specific non-convex optimization problem has
a negative solution (a premise we numerically support).
Assuming that this technical condition holds, this improves upon the previously
best known upper bound of $3 \, n$ rounds. We also show that
the previously best lower bound of $(\ln 2 + \ln (1+\ln 2) + o(1)) \, n$ is not tight.
\end{abstract}

\section{Introduction}

In this paper, we consider the \textbf{semi-random process} introduced recently in~\cite{process1} that can be viewed as a ``one player game''. The process starts from $G_0$, the empty graph on the vertex set $[n]=\{1,\ldots,n\}$. In each step $t$, a vertex $u_t$ is chosen uniformly at random from $[n]$. Then, the player (who is aware of graph $G_t$ and vertex $u_t$) needs to select a vertex $v_t$ and add an edge $u_tv_t$ to $G_t$ to form $G_{t+1}$. The goal of the player is to build a (multi)graph satisfying a given monotonely increasing property ${\mathcal A}$ as quickly as possible.

A \textbf{strategy} in this context is a sequence of functions $f_1,f_2,\ldots$, where for each $t \in \bbN$, $f_t(u_1,v_1,\ldots, u_{t-1},v_{t-1},u_t)$ is a distribution over $[n]$, given the history of the process up to and including step $t-1$, and vertex $u_t$. Then $v_t$ is chosen according to this distribution. If $f_t$ is an atomic distribution, then $v_t$ is determined by $u_1,v_1,\ldots,u_{t-1},v_{t-1},u_t$. As $f_t$ is determined by $u_t$, and the history of the process up to step $t-1$, it means that the player needs to select her strategy in advance, before the game actually starts. Given ${\bf f}=(f_1,f_2,\ldots)$ and real number $0<q<1$, let $\tau_q({\bf f},n)$ be the minimum $t$ for which $\pr(G_t\in {\cal A})\ge q$, where $(G_i)_{i=0}^t$ is obtained following strategy ${\bf f}$. Define
\[
\tau_{q}({\cal A},n) = \min_{{\bf f}} \tau_q({\bf f},n),
\]
where the minimum is over all strategies. 
Let 
\[
\tau_{{\cal A}}=\lim_{q\to 1-}\limsup_{n\to\infty} \frac{\tau_{q}({\cal A},n)}{n};
\]
note that the limit above exists, since for every $n$ the function $q\to \frac{\tau_{q}({\cal A},n)}{n}$ is nondecreasing, as $\cal A$ is  monotonely increasing. 

\medskip

In this paper, we concentrate on property ${\mathcal A = {\tt HAM}}$ that a graph has a Hamilton cycle. 
It was observed in~\cite{process1} that 
$$
1.21973  \le \ln 2+\ln(1+\ln2)  \le \time \le 3.
$$

We improve both upper and lower bounds for $\time$. For the upper bound, we need to assume some ``technical condition'' $\P$ that claims that some function is negative on its domain---the function is defined in Subsection~\ref{sec:function}. Unfortunately, we could not prove that $\P$ holds but, in Subsection~\ref{sec:numerical}, we provide strong numerical evidence for it. For the lower bound, we do not optimize the argument (as it gives a small improvement anyway) but aim for a relatively easy proof that shows that the currently existing bound is not tight.

\medskip

Here are our main results. Theorem~\ref{thm:upper_bound} is proved in Section~\ref{sec:upper_bound} whereas the proof of Theorem~\ref{thm:lower_bound} can be found in Section~\ref{sec:lower_bound}.

\begin{thm}\label{thm:upper_bound}
Suppose that property $\P$ holds. Then, 
$$
\time \le 2+4e^{-2}+0.07 < 2.61135.
$$
\end{thm}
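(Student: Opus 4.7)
The plan is to exhibit an explicit multi-phase strategy $\mathbf{f}$ and to show that, conditional on property $\P$, the strategy builds a Hamilton cycle within $T = (2 + 4 e^{-2} + 0.07 + o(1))n$ rounds with probability tending to $1$; by the definition of $\tau_{{\tt HAM}}$ this will yield the stated upper bound. The strategy splits into three phases whose lengths match the three summands in the bound. In Phase~I, of length $\approx 2n$, I would have the player build a spanning sparse subgraph close to a $2$-factor: when $u_t$ arrives, pick $v_t$ to be a vertex of minimum current degree, with structural tie-breaking that avoids creating very short cycles or high-degree vertices. Since each vertex is presented as $u_t$ with mean $2$ over $2n$ rounds, a Poisson heuristic predicts that a fraction $e^{-2} + o(1)$ of vertices are never presented and roughly $2 e^{-2} n$ additional vertices are presented only once; these \emph{deficient} vertices will constitute the bulk of the work in Phase~II.

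In Phase~II, of length $\approx 4 e^{-2} n$, the player repairs the deficient vertices: if $u_t$ is itself deficient, join it via $v_t$ to another deficient vertex or to a suitable endpoint of the skeleton built in Phase~I, while if $u_t$ is already saturated, use $v_t$ to attach a deficient vertex to the skeleton. The length $4 e^{-2} n$ should be calibrated so that, by Poissonization, each originally deficient vertex w.h.p.\ receives the extra edges needed to reach degree $\ge 2$, while simultaneously the number of path components is reduced to $o(n)$. Finally, in Phase~III, of length at most $0.07 n + o(n)$, a P\'osa-style rotation-extension procedure closes the remaining structure into a single Hamilton cycle; the constant $0.07$ arises from trading off the expansion properties guaranteed by the first two phases against the per-round absorption cost of a rotation.

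The main obstacle will be the rigorous analysis of Phase~I, where one must simultaneously track the degree distribution, the path/cycle structure, and the short-cycle counts across an inherently adversarial stochastic process of length $2n$. The natural tool is Wormald's differential equation method, which produces a system of ODEs whose solvability on the interval $[0,2]$ hinges on the negativity of a certain drift function---this is precisely what property $\P$ asserts, and it is the reason $\P$ cannot be dropped from the hypothesis. Conditional on $\P$, concentration of the degree sequence and control of the small-cycle counts follow from standard supermartingale estimates, and Phases~II and~III become comparatively routine adaptations of known Hamiltonicity arguments for sparse random graphs. Summing the three phase lengths then gives the claimed bound $\tau_{{\tt HAM}} \le 2 + 4 e^{-2} + 0.07 < 2.61135$.
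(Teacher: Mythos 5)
Your high-level plan (a multi-phase strategy whose phase lengths account for the summands $2$, $4e^{-2}$, $0.07$, followed by P\'osa-style rotation) matches the paper's in outline, but the technical content is substantially misidentified, and the proposal as written would not produce a proof.

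First, you misidentify what property $\P$ actually asserts and where it enters the argument. You conjecture that $\P$ is the negativity of a drift in a system of ODEs analyzing an adaptive Phase~I. In fact, the paper's Phase~1 is \emph{non-adaptive} (set $v_t = (t-1) \bmod n + 1$ for $1 \le t \le 2n$, so $G_{2n}$ is distributed exactly as $G_{2\text{-out}}$) and requires no differential-equation analysis at all. Property $\P$ arises much later: after all four phases, the paper shows the resulting graph $G^*$ has a 2-matching with $o(n)$ components (Lemma~\ref{lem:2-matching}), which it does by applying the Tutte--Berge formula (Theorem~\ref{thm:Tutte-Berge} and Corollary~\ref{cor:cyclic}) and union-bounding over vertex partitions $[n]=S\cup T\cup U\cup R$; the probability of a bad partition of type $\bm u$ is $\mathrm{poly}(n)\exp(f(\bm u)n)$, and $\P$ is precisely the assertion that $f(\bm u)<-\delta$ uniformly over the relevant domain. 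This combinatorial/entropic optimization problem is a fundamentally different object from an ODE drift condition, and nothing in your proposal would arrive at it.

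Second, the phase structure and budgets are off. In the paper, Phase~2 is where the player \emph{sends two out-edges from each vertex of in-degree $0$ and one out-edge from each vertex of in-degree $1$}, costing $(2|V_0|+|V_1|+o(1))n=(4e^{-2}+o(1))n$ rounds by Lemma~\ref{lem:aas}(a); you instead describe Phase~II as waiting for deficient vertices to \emph{receive} enough edges by Poissonization, which is a different mechanism and would not give the $4e^{-2}$ figure. More importantly, you allot $0.07n+o(n)$ rounds to Phase~III for P\'osa rotations. The paper spends the $0.07n$ budget in Phase~3 on \emph{uniformly random} edges ($v_t$ uniform over $[n]\setminus\{u_t\}$), which improve expansion so that the Tutte--Berge argument goes through; the P\'osa rotation/absorption step (Lemma~\ref{lem:Ham}) then costs only $o(n)$ additional rounds. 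Placing the $0.07n$ on the rotation phase is a genuine error: it leaves the 2-matching analysis with no budget, and it is not at all clear that $\Theta(n)$ rounds of absorption would be acceptable since the number of trials per absorbed component is only $O(1)$ on average.

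Finally, the central structural lemma is missing: you never isolate the claim that $G^*$ a.a.s.\ admits a 2-matching with $o(n)$ components, which is the crux that $\P$ is designed to deliver, and without which the rotation-extension argument has no starting structure to work from. Your proposal also omits the fourth ``golden-edge'' cleanup phase that forces $\delta(\hat G_{\tau_4}) \ge 4$, a minimum-degree bound that is used repeatedly in the P\'osa-rotation analysis (e.g.\ in Claim~\ref{claim:linear}).
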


\begin{thm}\label{thm:lower_bound}
There exists a universal constant $\eps > 10^{-8}$ such that
$$
\time \ge \ln 2+\ln(1+\ln2) + \eps.
$$
\end{thm}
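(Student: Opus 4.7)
The plan is to exhibit a local obstruction to Hamiltonicity that, with probability bounded away from $0$, persists past the minimum-degree-$2$ threshold $c^\star n$ where $c^\star := \ln 2 + \ln(1+\ln 2)$. My obstruction is a ``twin pair'': two distinct vertices $u \ne w$, each of degree $2$, with identical neighborhoods $N(u) = N(w) = \{a, b\}$. Such a pair forces any Hamilton cycle to traverse the four edges $ua, ub, wa, wb$, giving $a$ and $b$ each four cycle-edges, a contradiction when $n > 4$. Any strategy that produces such a pair at time $t$ fails the Hamiltonicity test at that time, which converts into a lower bound on $\time$ after an additional padding argument.

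The argument has three stages. First, for any strategy $\mathbf{f}$, I would rerun the supermartingale / differential-equation analysis of [process1] on the quantity $2 N_0(t) + N_1(t)$, where $N_k(t)$ is the number of degree-$k$ vertices at time $t$; this recovers $\time \ge c^\star$, and the same argument implies that at time $t_0 := \lfloor c^\star n\rfloor$, if the minimum degree equals $2$, then at least $(3 - 2c^\star)\, n > 0.56\, n$ vertices have degree exactly $2$. The latter follows from the average-degree identity $\sum_k k p_k = 2 c^\star < 3$ together with $p_0 = p_1 = 0$, where $p_k$ is the fraction of degree-$k$ vertices. Second, via a second-moment / Poisson approximation over pairs of degree-$2$ vertices, I would argue that the expected number of twin pairs at time $t_0$ is $\Omega(1)$ even under an adversarial strategy; Paley--Zygmund then gives a positive constant lower bound on the probability that $G_{t_0}$ is non-Hamiltonian. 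Third, I would extend the obstruction to time $(c^\star + \epsilon) n$ by observing that the $\epsilon n$ extra edges can destroy at most $O(\epsilon n)$ twin candidates, so a constant-probability obstruction survives whenever $\epsilon > 10^{-8}$.

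The hard part will be stage two: an adaptive player might try to spread the neighborhoods of the degree-$2$ vertices so as to suppress twin pairs. To handle this, I would restrict attention to twin-pair candidates $(a,b)$ with $a,b$ lying in a ``bulk'' class of moderate-degree vertices of size $\Theta(n)$ (whose existence is forced by the same averaging identity used in stage one). This confines admissible neighborhoods to $\Theta(n^2)$ pair-slots; distributing the $\Theta(n)$ degree-$2$ vertices among these slots then yields an $\Omega(1)$ expected number of twin pairs by a pigeonhole / second-moment computation, together with a coupling that controls how the player's online choices correlate the two endpoints of each degree-$2$ vertex. Because the paper settles for a small, non-optimized $\epsilon$, crude constants suffice in this computation, and the resulting $\epsilon > 10^{-8}$ falls out by direct, if unpleasant, bookkeeping.
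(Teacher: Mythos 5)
Your proposal takes a genuinely different route from the paper, but I believe it has a fatal gap at Stage 2.

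The paper's obstruction is a ``problematic vertex'' $x$: a vertex hit by the random $u_t$ at least three times, whose first three in-neighbours $y_1,y_2,y_3$ in the directed process end with in-degree 1 and out-degree 1 (so degree 2 in $G_t$). This forces three cycle edges at $x$. The crucial feature is that the event ``$x$ is hit three times'' is driven entirely by the random $u_t$'s, which the player cannot influence, and the $y_i$'s are degree-0 vertices the player was \emph{forced} to choose if she plays greedily. Under the degree-greedy strategy there are $\Theta(n)$ such claws a.a.s.\ (Claim~\ref{claim:1} of the paper), and destroying them costs time; the paper then carefully quantifies the trade-off between playing non-greedily to avoid creating claws (Claim~\ref{claim:3}) and the resulting delay in reaching minimum degree 2 (Claim~\ref{claim:2}).

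Your obstruction (a twin pair $u,w$ of degree-2 vertices with $N(u)=N(w)=\{a,b\}$) is a perfectly valid local obstruction to Hamiltonicity, but unlike the paper's 3-claw it is \emph{avoidable at essentially zero cost}. Consider the step at which the fourth of the edges $ua,ub,wa,wb$ is about to be added: one endpoint is $u_t$ (random) and the other is $v_t$ (player-chosen), and the player sees $G_t$ and $u_t$ before choosing. A degree-greedy player picks $v_t$ from the $\Theta(n)$ current minimum-degree vertices; the ``bad'' choices of $v_t$ that would complete a twin pair given $u_t$ number only $O(\text{polylog}\,n)$ a.a.s.\ (they must be, or be adjacent to, a vertex already paired with a neighbour of $u_t$, and all relevant degrees are $O(\log n/\log\log n)$). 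She can therefore always make a safe minimum-degree choice, simultaneously achieving the minimum-degree-2 goal on the same schedule as the naive greedy strategy and producing \emph{zero} twin pairs. So the expected twin-pair count under the optimal strategy is $o(1)$, not $\Omega(1)$, Paley--Zygmund gives nothing, and Stage~3 is moot. Your ``bulk class / pigeonhole'' and coupling sketch does not resolve this, because the issue is not about how to estimate the expectation for a fixed non-adaptive scheme --- it is that an adaptive player really does control which pair $\{a,b\}$ each degree-2 vertex is assigned to, and $\Theta(n)$ items distributed by an adversary into $\Theta(n^2)$ slots need not collide at all.

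The underlying lesson is that a viable obstruction for this lower bound must occur $\Theta(n)$ times a.a.s.\ under the greedy strategy (so that it cannot simply be dodged step-by-step), and it must be triggered by the uncontrollable random $u_t$'s rather than by a coincidence among the player-chosen $v_t$'s. The paper's 3-claw has both properties; your twin pair has neither. Your Stage~1 degree-distribution observation is fine and consistent with what the paper does implicitly via the differential-equation method, but by itself it does not rescue the argument.
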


All asymptotics in this paper refer to $n \to \infty$. We say that an event holds \textbf{asymptotically almost surely} (\textbf{a.a.s.}) if the probability that it holds tends to 1 as $n \to \infty$. In the proofs we use the standard Landau notation. Given two sequences of real numbers $a_n$ and $b_n$, we write $a_n=O(b_n)$ if there exists a constant $C>0$ such that $|a_n|\le C|b_n|$ for all $n$. We write $a_n=o(b_n)$ if $b_n>0$ for all sufficiently large $n$ and $\lim_{n\to\infty} a_n/b_n=0$.

\section{Upper Bound}\label{sec:upper_bound} 

In order to obtain an upper bound for $\time$, one needs to propose a strategy for the player to build a graph during the semi-random process, and show that after a certain number of steps the resulting graph is a.a.s.\ Hamiltonian. 

In order to warm up, let us recall an observation made in~\cite{process1} that gives $\time \le 3n$ a.a.s. To see it, the following simple strategy can applied: let $v_t= (t-1) \pmod n +1 $ for all $1\le t\le 3n$. Note that this is a non-adaptive strategy, that is, function $f_t$ does not depend on the history of the process nor vertex $u_t$ chosen at time $t$. More importantly, it is easy to see that the resulting graph has the same distribution as the well-known $G_{\text{3-out}}$ process that is Hamiltonian a.a.s.~\cite{3-out}. 

In general, the \textbf{$m$-out process} is defined for any natural number $m$: each vertex $v \in [n]$ independently chooses $m$ random out-neighbours from $[n]$ to create the random digraph $D_{m\text{–out}}$. We then obtain $G_{m\text{–out}}$ by ignoring orientations. Note that $G_{m\text{–out}}$ is a multi-graph (it may have loops or multiple edges) with minimum degree $m$ and precisely $mn$ edges. In the model, we can either allow these multiple edges and loops, replace multiple edges with single edges and remove loops, or condition on them not occurring.  (Since the probability that there are no multiple edges is bounded away from zero, any property that holds a.a.s.\ in the model that allows multiple edges also holds a.a.s.\ when we condition on no multiple edges.) For our application, when the strategy creates a multiple edge or a loop in the underlying undirected graph, we simply ``discard'' that edge. That is, we will not use that edge for the construction of a Hamilton cycle.

This section is structured as follows. In Subsection~\ref{sec:upper_strategy}, we define a strategy for the player that creates a random graph $G^*$. The main goal is to prove that $G^*$, together with some additional $o(n)$ semi-random edges, is Hamiltonian a.a.s. Since the argument is quite involved, an overview of the proof is provided in Subsection~\ref{sec:upper_overview}. In Subsection~\ref{sec:upper_notation}, we introduce definitions and notation that will be used through the entire paper. Some useful properties of the graphs involved in the argument are extracted and proved in Subsection~\ref{sec:upper_properties}. In order to achieve our goal, in particular, we need to prove that a.a.s.\ $G^*$ has a 2-matching with $o(n)$ components (the definition is provided in Subsection~\ref{sec:upper_overview}). Subsection~\ref{sec:upper_2-matching_preparation} prepares us for this task. As already mentioned earlier, this part requires property $\P$ that is defined at the end of Subsection~\ref{sec:function}. The proof that a.a.s.\ $G^*$ has a 2-matching with few components is finished in Subsection~\ref{sec:upper_2matching}. Now, it is enough to guide the semi-random process such that after additional $o(n)$ rounds the graph has a Hamiltonian cycle. This last task does not depend on the argument used to show that $G^*$ has the desired 2-matching and so, in fact, we do it earlier, in Subsection~\ref{sec:upper_final}.

\subsection{Our Strategy}\label{sec:upper_strategy}

In this subsection, we define a strategy for the player that creates a random (multi)graph $G^*$. 
It will be convenient to work with the directed graph $D_t$ underlying $G_t$. For each edge $u_tv_t$ that is added to $G_t$ at time $t$, we put a directed edge from $v_t$ to $u_t$ in $D_t$.
As mentioned before, for the construction of a Hamilton cycle we will only use edges from a subgraph of $G_t$. For any multigraph $G$, let $\hat G$ denote the simple graph obtained from $G$ by deleting all loops and replacing all multiple edges by single edges. Thus, the sequence of multigraphs $(G_t)$ immediately yields the corresponding sequence of simple graphs $(\hat G_t)$.

Consider the following strategy that will be defined in four phases.  During the first phase, for $1\le t\le 2n$, let $v_t= (t-1) \pmod n +1$. It is clear that $G_{2n}$ has the same distribution as $G_{\text{2-out}}$. Let $V_0$ and $V_1$ be the sets of vertices in $D_{2n}$ of in-degree 0 and, respectively, of in-degree 1. During the second phase, two out edges are added from every vertex in $V_0$ and one out edge is added from every vertex in $V_1$. During the third phase, we add $0.07n$ directed edges uniformly at random, that is, in each step, $v_t$ is uniformly chosen from $[n]\setminus \{u_t\}$. We call $v$ a {\em deficit vertex} if after phase 3 its degree is less than 4. Then, in the fourth and last phase, we repeatedly add a semi-random edge, coloured golden for convenience, coming out of a deficit vertex until its degree in the current underlying undirected {\em simple} graph (that is, in $\hat G_t$) becomes at least 4. Let $\tau_i$ denote the last step of phase $i$ (in particular, $\tau_1=2n$). 
Note that a golden semi-random edge is added out of $u$ only if a loop or a multiple edge incident with $u$ was created in the process $(G_t)$ during the first two phases. It is easy to show, by a standard first moment calculation, that $G_{\tau_3}$ has $O(1)$ loops or parallel edges, and $o(1)$ other types of multiple edges in expectation. If $v$ is incident with a loop in $G_{\tau_3}$ then $v$ may send out up to two semi-random edges in phase 4. If $v$ is incident with a parallel edge in $G_{\tau_3}$ then $v$ may send out at most one semi-random edge in the final phase.  Hence, a.a.s.\ $G_{\tau_4}$ and $\hat G_{\tau_4}$ have the following property. 
\begin{tabbing}
{(\tt E)}: \hspace{0.2cm} \= There are at most $\ln\ln n$ double edge or loops in $G_{\tau_4}$ and they are all vertex disjoint.
\\
\>There are at most $\ln\ln n$ golden edges, inducing vertex-disjoint paths of length 1 or 2,
\\
\>
and every pair of deficit vertices are at distance at least $\ln n/5$ from each other  in $\hat G_{\tau_4}$.
\end{tabbing}
Thus a.a.s.\ the total number of semi-random edges added during the last two phases is $(0.07+o(1))n$. Note that the addition of the golden edges guarantees that the minimum degree of $\hat G_{\tau_4}$ is at least 4, which will be used in the proof later. Finally, let $G^*=G_{\tau_4}$, the multigraph obtained after the last step of phase 4. As we will only use edges in $\hat G_{\tau_4}\subseteq G^*$, we will mainly focus on the process $(\hat G_t)$.


\subsection{Overview of the Proof}\label{sec:upper_overview}

Let us present an overview of the proof of Theorem~\ref{thm:upper_bound}. First, we will investigate how long it takes to construct graph $G^*$.
\begin{lemma}\label{lem:number_of_edges}
A.a.s.\ the following holds
$$
|E(G^*)| = (2+4e^{-2}+0.07+o(1))n.
$$
\end{lemma}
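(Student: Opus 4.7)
The strategy is to split $|E(G^*)|$ according to the four phases and estimate each contribution. Phases~1 and~3 contribute exactly $2n$ and $0.07n$ edges respectively by construction, and property~(\texttt{E}) (which the excerpt states holds a.a.s.) guarantees that Phase~4 adds at most $\ln\ln n = o(n)$ golden edges. The only remaining task is to show that the Phase~2 contribution, namely $2|V_0|+|V_1|$, equals $(4e^{-2}+o(1))n$ a.a.s.

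Since $v_t$ cycles through $[n]$ twice during Phase~1 while $u_1,\dots,u_{2n}$ are i.i.d.\ uniform on $[n]$, the in-degree of any fixed $u\in[n]$ in $D_{2n}$ equals $X_u:=\sum_{t=1}^{2n}\ind{u_t=u}\sim\bin(2n,1/n)$. Hence
\[
\pr(X_u=0) = (1-1/n)^{2n} = e^{-2}+O(1/n), \qquad \pr(X_u=1) = 2(1-1/n)^{2n-1} = 2e^{-2}+O(1/n),
\]
so linearity of expectation gives $\ex|V_0|=(e^{-2}+o(1))n$ and $\ex|V_1|=(2e^{-2}+o(1))n$.

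For concentration, I would apply McDiarmid's bounded-differences inequality to $|V_0|$ and $|V_1|$ viewed as functions of $(u_1,\dots,u_{2n})$. Altering a single coordinate $u_t$ changes the in-degree of at most two vertices by~$1$, and therefore changes $|V_i|$ by at most $2$; a standard application then gives
\[
\pr\bigl(\bigl||V_i|-\ex|V_i|\bigr|\ge n^{2/3}\bigr) \le 2\exp(-n^{1/3}/16) = o(1)
\]
for each $i\in\{0,1\}$. Consequently $|V_0|=(e^{-2}+o(1))n$ and $|V_1|=(2e^{-2}+o(1))n$ a.a.s., so $2|V_0|+|V_1|=(4e^{-2}+o(1))n$. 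Summing the four phase contributions yields $|E(G^*)|=(2+4e^{-2}+0.07+o(1))n$, as required.

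This is a routine concentration argument; I do not foresee any real obstacle. The one point that requires care is that $E(G^*)$ denotes the edge \emph{multiset} of the multigraph $G^*$, so every loop or parallel edge created in Phases~1--2 is still counted---this is why the Phase~2 contribution is genuinely $2|V_0|+|V_1|$, and not something smaller after discarding duplicates. The small error $O(\ln\ln n)$ absorbed from Phase~4 is handled purely by property~(\texttt{E}), which is already established in the excerpt via a standard first-moment calculation on short cycles in $G_{\text{2-out}}$.
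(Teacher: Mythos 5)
Your proof is correct and takes essentially the same route as the paper: the paper also reduces the lemma to showing $|V_0|=(e^{-2}+o(1))n$ and $|V_1|=(2e^{-2}+o(1))n$ a.a.s.\ (this is Lemma~\ref{lem:aas}(a)), computes the same expectations, and then invokes a routine concentration argument (the paper uses the second moment method where you use McDiarmid's bounded-differences inequality---both are standard and equivalent here). The only cosmetic difference is that the paper leaves the phase-by-phase edge count implicit and the concentration details omitted, whereas you spell them out.
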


In order to state the next lemma, we need one definition. A  \textbf{2-matching} in a graph $G$ is a simple subgraph $H$ of $G$ with maximum degree at most 2, that is, a collection of vertex-disjoint paths and cycles. Moreover, we assume that $V(H)=V(G)$ so some paths in $H$ could be isolated vertices. 

\medskip

Now, we are ready to state the lemma. This is the place where property $\P$ is needed.

\begin{lemma}\label{lem:2-matching}
Suppose that property $\P$ holds. Then, a.a.s.\ $G^*$ has a 2-matching with $o(n)$ components.
\end{lemma}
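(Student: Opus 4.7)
The plan is to build the required 2-matching via a Karp--Sipser style greedy algorithm adapted to the 2-matching setting, and to analyse its trajectory using Wormald's differential equation method. Before starting, I would expose the randomness of $G^*$ in a convenient order. Since $G^*$ is the superposition of a 2-out digraph (Phase~1), ``deficiency-correcting'' out-edges from Phase~2, $0.07n$ uniform edges from Phase~3, and the $o(n)$ golden edges from Phase~4, the degree sequence of $G^*$ has a predictable limiting shape: the in-degree distribution is essentially $\po(2)$ truncated at $0$ and $1$ and then bumped up to at least $2$, the out-degrees are deterministic, and property {(\tt E)} says that the golden correction is supported on $o(n)$ well-separated vertices. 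Conditional on this degree sequence, the remaining randomness can be coupled to a configuration-model pairing, after which all subsequent calculations become standard.

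Second, I would run the following algorithm. Maintain a partial 2-matching $M$ and a residual (multi)graph $R$ on the vertices with $M$-degree less than $2$. At each step: if some vertex $v$ has residual degree exactly $1$, add the unique incident edge of $R$ to $M$ (a forced move); otherwise, pick an edge of $R$ uniformly at random and add it to $M$ (a random move). In both cases, a vertex is removed from $R$ once its $M$-degree reaches $2$. The analysis proceeds by showing that the vector of residual-degree counts concentrates around the solution of a system of ordinary differential equations. Property $\P$, defined in Subsection~\ref{sec:function}, asserts the strict negativity of a specific function extracted from this ODE on its domain; this is precisely what ensures that the forced moves never accumulate into a macroscopic fraction of vertices, so that short cycles and stranded components are created at rate $o(1)$ per step. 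Integrating over the $\Theta(n)$ steps then yields a total of $o(n)$ components.

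The main obstacle will be the endgame of the algorithm, when only $o(n)$ vertices remain in $R$. In this regime the differential equation approximation breaks down, and one needs a direct first-moment argument on the configuration model to bound the number of ``bad'' local configurations---for instance, short cycles entirely consisting of residual-degree-$2$ vertices, or small clusters of residual-degree-$1$ vertices that cannot be extended. Using property~{(\tt E)} to handle the sparse set of deficit vertices and golden edges, together with standard switching arguments to estimate short-cycle counts, the expected number of such bad configurations should be $o(n)$; since each contributes only $O(1)$ components to the final 2-matching, the overall component count is $o(n)$ as required.
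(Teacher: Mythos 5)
Your proposal takes a genuinely different route from the paper's, but it has a fatal gap: it mischaracterizes and effectively misuses the hypothesis of the lemma. Property $\P$ as defined in Subsection~\ref{sec:function} is \emph{not} a statement about the trajectory of a Karp--Sipser-type greedy 2-matching algorithm. It is the statement that the explicit function $f(\bm u)$ --- the exponential rate in a first-moment bound on the number of Tutte--Berge witness partitions $[n]=S\cup T\cup U\cup R$ with prescribed profile $\bm u$ --- is bounded away from $0$ from below over the feasible region with $\alpha_R\le 0.995$. Your sentence ``Property $\P$~\ldots\ asserts the strict negativity of a specific function extracted from this ODE on its domain; this is precisely what ensures that the forced moves never accumulate'' asserts an equivalence between two quantities that have no established relation: the rate function $f(\bm u)$ in the partition-counting argument is in no way the same object as (or known to control) the drift/error terms of a DE-method analysis of a degree-greedy matching algorithm. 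Consequently, even if your algorithmic plan could be carried out in full detail, it would not be a proof of \emph{this} lemma, because it would not be invoking the hypothesis ``suppose $\P$ holds'' --- it would be proving an unconditional statement, or a statement conditional on a completely different (and unstated) analytic property. The lemma as written is conditional on $\P$ precisely because the authors could not verify $\P$, so any purported proof must actually consume that hypothesis.

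For contrast: the paper's proof is structural and non-constructive. It shows (Lemma~\ref{lem:gstar_cyclic}) that a.a.s.\ $\hat G_{\tau_4}\in\cyclic$, so Corollary~\ref{cor:cyclic} applies: if every 2-matching of $\hat G_{\tau_4}$ had $\ge\eps n$ components, there would exist a partition $S\cup T\cup R\cup U$ satisfying (a)--(d). The expected number of such partitions with $\alpha_R\le 0.995$ is $\sum_{\bm u}\operatorname{poly}(n)\exp(f(\bm u)n)$, which property $\P$ forces to be $o(1)$; partitions with $\alpha_R>0.995$ are ruled out by a separate edge-density argument via Lemma~\ref{lem:aas}(c). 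This is the 2-matching analogue of the Bohman--Frieze argument for $G_{3\text{-out}}$, pushed through a much tighter first-moment computation. Your Karp--Sipser/configuration-model sketch is a legitimate alternative \emph{framework} (and would, if it worked, be stronger, since it would be unconditional), but as written it does not close the analysis: it neither sets up nor solves the ODE system, does not bound the accumulation of forced/stranded vertices, does not justify the configuration-model coupling for $G^*$ (whose degree/edge structure is not that of a standard configuration model conditioned on a degree sequence, given the dependencies introduced by Phases~2--4), and, most importantly, cannot legitimately cite $\P$ as the missing ingredient.
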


The final ingredient does not require property $\P$ anymore. 

\begin{lemma} \lab{lem:Ham}
Suppose $G^*$ has a 2-matching with $o(n)$ components. Then, there exists an adaptive strategy such that a.a.s.\ the semi-random process builds a Hamiltonian graph within an additional $o(n)$ steps. 
\end{lemma}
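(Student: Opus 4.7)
The plan is to give an adaptive extension strategy that, starting from the 2-matching $M \subseteq G^*$ with $k = o(n)$ components, iteratively reduces the component count to $1$ (a Hamilton cycle) while using only $o(n)$ additional semi-random rounds. I would proceed in three phases.

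\textbf{Phase A (breaking cycles).} First I would convert $M$ into a system of paths by eliminating each cycle component. For each cycle $C$ of $M$, the connectivity of $G^*$ (inherited from $G_{2\text{-out}}$ and augmented by the phase-3 random edges of Subsection~\ref{sec:upper_strategy}) guarantees a $G^*$-edge $uv$ with $u \in V(C)$ and $v \notin V(C)$. Adding $uv$ to the matching and deleting one of $u$'s two $M$-edges in $C$ opens $C$ into a path and merges it with the component containing $v$, reducing the total component count by $1$. This phase consumes zero semi-random rounds, exploiting only edges already present in $G^*$.

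\textbf{Phase B (merging paths).} Now $M$ is a disjoint union of at most $k$ paths. I iteratively merge them using the semi-random process. When $u_t$ is a path endpoint, I pick $v_t$ to be an endpoint of a different path and merge. When $u_t$ is interior to some path, I perform a P\'osa-style rotation: I add the edge from $u_t$ to an endpoint of a second path, then delete the $M$-edge joining $u_t$ to one of its two path-neighbours $w$, producing a single longer path whose new endpoint is $w$. The key observation is that once one of the paths reaches length $\Omega(n)$, the rotation closure (using $G^*$-edges together with the edges already added) makes the set of ``useful'' values of $u_t$ have density $1 - o(1)$, so the total number of semi-random rounds in this phase is $o(n)$.

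\textbf{Phase C (closing the cycle).} Once $M$ is a single Hamilton path with endpoints $a, b$, I close it via the standard P\'osa-rotation argument inside $G^* \cup \{\text{edges added so far}\}$: rotations generate sets $A, B$ of alternative endpoints with $|A|, |B| = \Omega(n)$, and either some pair in $A \times B$ is already an edge of $G^*$ (closing immediately), or else the next $u_t \in A$ lets me set $v_t \in B$ and close the cycle in $O(\log n)$ expected rounds. The hard part will be quantifying the expansion of $G^*$---inherited jointly from the $G_{2\text{-out}}$ substructure of phases~1--2 and the $0.07\,n$ fully random edges of phase~3 of Subsection~\ref{sec:upper_strategy}---finely enough to guarantee that the rotation closures in Phases~B and~C are of linear size and that the exceptional cases in Phase~A (e.g.\ cycles whose $G^*$-neighbourhood is atypical) are all handled cheaply; this expansion analysis, combined with a careful accounting of the ``useful'' event per round throughout Phase~B, is where the bulk of the technical work will lie.
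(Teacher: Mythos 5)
Your proposal has the right conceptual core---P\'osa rotation plus component absorption---but it contains concrete errors and defers exactly the step that carries the technical weight of the paper's argument.

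First, Phase~A does not work as stated. Connectivity of $G^*$ gives you an edge $uv$ with $u\in V(C)$ and $v\notin V(C)$, but it does not control where $v$ sits: if $v$ is \emph{interior} to a path (or to another cycle), then adding $uv$ and deleting one $M$-edge at $u$ creates a vertex of degree $3$ at $v$, so you must also delete an $M$-edge at $v$, and the component count does not drop. The paper avoids this issue entirely: it never converts all cycles to paths up front. Instead it grows a \emph{single} path $P$ (obtained from one component) and absorbs the other components one at a time; when the absorbed component is a cycle the whole cycle is eaten, and when it is a path only the longer half is eaten (the shorter half is discarded and the count still does not go up).

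Second, Phase~B is flawed in two ways. (i)~The interior-rotation step you describe does not merge components: starting from $P=p_1\ldots p_m$ and $P'=q_1\ldots q_r$, adding the edge $p_iq_1$ ($1<i<m$) and then deleting $p_ip_{i+1}$ leaves you with the two paths $p_1\ldots p_iq_1\ldots q_r$ and $p_{i+1}\ldots p_m$---the component count is unchanged. (ii)~The claim that the set of useful $u_t$ has density $1-o(1)$ is too strong and is not what the graph can supply; $G^*$ has bounded average degree, so the rotation-endpoint set is only of size $\Omega(n)$, not $(1-o(1))n$. Fortunately $\Omega(n)$ is enough (it gives $O(1)$ expected rounds per absorption), and this is exactly what the paper proves. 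The paper's actual move structure is also different from yours in an essential way: the player fixes $v_t$ to be a vertex \emph{outside} $V(P)$ (an endpoint of a path or any vertex of a cycle still to be absorbed), and then simply waits for a round in which $u_t$ lands in the endpoint set ${\tt End}$; a rotation then makes $u_t$ the endpoint of $P$ and the semi-random edge $u_tv_t$ absorbs a new component.

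Third---and this is the crux---you acknowledge but do not carry out the proof that ${\tt End}$ has linear size. In the paper this is Claim~\ref{claim:linear}, which occupies the bulk of the proof: it combines the local sparsity estimate of Lemma~\ref{lem:aas}(b), the minimum-degree-$4$ guarantee $\delta(\hat G_{\tau_4})\ge 4$ from Observation~\ref{obs}(c), and an independent-set observation about the ``red'' rotation endpoints to force $|N({\tt End})|>2.1|{\tt End}|$, contradicting the P\'osa bound $|N({\tt End})|<2|{\tt End}|$ unless $|{\tt End}|=\Omega(n)$. Without an argument of this kind your Phases~B and~C are unsupported; this is precisely where having only $G_{2\text{-out}}$ plus light repairs (rather than $G_{3\text{-out}}$) makes the expansion nontrivial.
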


Theorem~\ref{thm:upper_bound} follows immediately from the above three lemmas. 
Our strategy for constructing a Hamilton cycle in Lemma~\ref{lem:Ham} is the same as that in~\cite{3-out} where a Hamilton cycle is found in $G_{3\text{-out}}$. We start with a 2-matching $F$ of $G^*$ which has $o(n)$ components. Then, we take an arbitrary component $C$ of $F$ and let $P$ be a path that spans all vertices of $C$. By applying Pos\'{a} rotations, we use either edges in $G^*$, or additional $o(n)$ edges added to $G^*$ to repeatedly absorb vertices in other components of $F$ into the long path we carefully construct, until finally completing the path into a Hamilton cycle. Having less available edges in $G^*$ than in $G_{3\text{-out}}$ requires some new treatments in the proof of Lemma~\ref{lem:Ham}.

In order to prove Lemma~\ref{lem:2-matching}, as it is done in~\cite{3-out}, we will apply Tutte and Berge's formula for the size of a maximum 2-matching of $\hat G_{\tau_4}\subseteq G^*$. However, as we have significantly less edges in $\hat G_{\tau_4}$ than in $G_{3\text{-out}}$, it becomes much more challenging to verify the Tutte-Berge conditions. Rough bounds that worked in~\cite{3-out} fail to work in our setting and, in order to achieve a tighter bound we end up with an optimization problem involving a high dimensional objective function. That results in the technical property $\cal P$ that we only support numerically.

\subsection{Definitions and Notation}\label{sec:upper_notation}

In this subsection, we introduce basic definitions and notation that will be used throughout the paper. Let us start from graph theoretic ones.
For a given subset of vertices $S \subseteq V(G)$, let $G[S]$ be the \textbf{graph induced by set $S$}, that is, $V(G[S])=S$ and 
$$
E(G[S]) = \{ uw \in E(G) : u,v \in S\} \subseteq E(G). 
$$
Let $e(S)$ denote the number of edges induced by set $S$, that is,
$$
e(S) = |E(G[S])| = | \{ xy \in E(G) : x,y \in S \} |.
$$
Moreover, let 
$$
N(S) = \{v\in V(G) \setminus S : \exists u\in S \text{ such that } uv \in E(G) \}. 
$$
Finally, we say that $S$ is an \textbf{independent set} if $S$ induces no edge, that is, $e(S) = 0$. 

Given subsets of vertices $U,W \subseteq V(G)$, 
let $e(U,W)$ denote the number of edges with exactly one end in $U$ and the other end in $W$, that is,  
$$
e(U,W) = | \{ uw \in E(G) : u \in U, w \in W \} |.
$$
For a given vertex $v \in V(G)$, let $\deg(v)$ be the \textbf{degree of $v$}, that is, the number of neighbours of $v$ in $G$. Let $\delta(G) = \min\{ \deg(v) : v \in V(G)\}$ denote the \textbf{minimum degree} of a graph $G$. 

\medskip

For a directed graph $D$ and a given vertex $v \in V(D)$, let $\deg^-(v)$ and $\deg^+(v)$ be the in- and out-degree of $v$, that is, the number of directed edges going to $v$ and, respectively, going from $v$ in $D$.

\medskip

For sequences of real numbers $a_n$ and $b_n$, we say $a_n=poly(n)$ if there exists a constant $C>0$ such that $n^{-C}<a_n<n^C$ for every $n$.  We say $a_n=O(b_n)$ if there exists a constant $C>0$ such that $|a_n|<C|b_n|$ for all $n$. We say $a_n=o(b_n)$ if eventually $b_n>0$ and $\lim_{n\to\infty} a_n/b_n=0$.

\medskip

Finally, let us introduce the \textbf{binomial random graph} $\G(n,p)$. More precisely, $\G(n,p)$ is a distribution over the class of graphs with vertex set $[n]$ in which every pair $\{i,j\} \in \binom{[n]}{2}$ appears independently as an edge in $G$ with probability~$p$. Note that $p=p(n)$ may (and in our application it does) tend to zero as $n$ tends to infinity. 

\subsection{Some Technical Properties and Proof of Lemma~\ref{lem:number_of_edges}}\label{sec:upper_properties}

Let us start with the following simple observations. 

\begin{observation}\lab{obs}
Our process can be coupled such that the following properties hold.
\begin{enumerate}
    \item [(a)] $G_{2n}$ has the same distribution as $G_{\text{2-out}}$ and thus $\hat G_{\tau_1}\subseteq G_{\text{2-out}}$. 
    \item [(b)] $\hat G_{\tau_2}$ is a subgraph of $G_{4\text{-out}}$; in particular, 
    \begin{equation}\label{eq:prob_tildeG}
        \pr \left( S \subseteq E(\hat G_{\tau_2}) \right) \le \left( \frac {8}{n} \right)^{|S|}, \text{ for any } S \subseteq {[n] \choose 2}.
    \end{equation}
    \item [(c)] $\delta(\hat G_{\tau_4}) \ge 4$ and
    \begin{eqnarray}
        \pr \Big( S \subseteq E(\hat G_{\tau_3})\Big) &\le& \left( \frac {8.15}{n} \right)^{|S|}, \text{ for any } S \subseteq {[n] \choose 2} \label{eq:prob_Gstar}\\
        \pr \Big( S \subseteq E(\hat G_{\tau_4})\Big) &\le& \left( \frac {13}{n} \right)^{|S|}, \text{ for any } S \subseteq {[n] \choose 2}\label{eq:prob_Gfinal}.
    \end{eqnarray}
    
\end{enumerate}
\end{observation}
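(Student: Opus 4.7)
The plan is to establish the three parts by explicit couplings with the $D_{m\text{-out}}$ model for appropriate $m$, and to extract the tail bounds via union-bound arguments over the possible ``origin'' of each edge in $S$.

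For (a), phase~1 is non-adaptive: each vertex $j \in [n]$ serves as $v_t$ exactly twice (at $t=j$ and $t=j+n$), with the paired $u_t$ uniform on $[n]$ and independent of prior history. Hence each vertex emits exactly two independent uniform random out-edges in $D_{\tau_1}$, reproducing the distribution of $D_{2\text{-out}}$; passing to the underlying simple graph yields $\hat G_{\tau_1} \subseteq G_{2\text{-out}}$.

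For (b), phase~2 adds at most two further uniform random out-edges per vertex (two for $v \in V_0$, one for $v \in V_1$, none otherwise), so every vertex has at most four independent uniform out-neighbours in $D_{\tau_2}$; this gives the coupling $D_{\tau_2} \subseteq D_{4\text{-out}}$. For the tail bound, for each $e = \{x,y\} \in S$ I would declare one of $x,y$ its ``chooser'' (so that $e$ is witnessed as an out-edge of that endpoint). There are $2^{|S|}$ such orientations $\sigma$, and for a fixed $\sigma$, if vertex $w$ is chooser of $d_w$ edges (with $\sum_w d_w = |S|$), the independence of out-choices across distinct vertices together with the fact that each vertex makes $4$ i.i.d.\ uniform draws yields an orientation-probability of $\prod_w (4/n)^{d_w} = (4/n)^{|S|}$, with the understanding that the factor is zero when $d_w > 4$. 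Summing over orientations gives the claimed $(8/n)^{|S|}$.

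For (c), the minimum-degree statement $\delta(\hat G_{\tau_4}) \ge 4$ is immediate from the stopping rule of phase~4. For the probability bounds, I would apply the same union-bound strategy, now also partitioning $S$ across phases. Any single phase~3 step produces a fixed edge $\{u,v\}$ with probability at most $2/(n(n-1))$, so over $0.07n$ phase-3 steps the contribution per edge is at most $0.15/n$ for $n$ large; this combined with (b) and the $2^{|S|}$ sum over phase-2/phase-3 partitions of $S$ yields $(8/n + 0.15/n)^{|S|} = (8.15/n)^{|S|}$. For $\hat G_{\tau_4}$ the additional golden edges take the form $\{u_t, v\}$ with $v$ an adaptively chosen deficit vertex and $u_t$ uniform; orienting each putative golden edge at its deficit endpoint and using independence of the uniform $u_t$'s across phase-4 steps yields a per-edge contribution of $O(1/n)$, the hidden constant being at most the expected number of golden edges incident to a fixed vertex. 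A union bound over the two sources (inherited from $\hat G_{\tau_3}$ versus freshly golden) then gives $(13/n)^{|S|}$, provided the phase-4 constant can be controlled to be at most $4.85$. The main obstacle is precisely this phase~4 analysis: the number of golden edges at each deficit vertex is a random stopping time (a coupon-collector-type time to raise its simple-graph degree to $4$), and the choice of which deficit vertex to process is adaptive. I would handle this by bounding the per-vertex number of golden edges by $4 - \deg_{\hat G_{\tau_3}}(v)$ plus a small overhead from duplicate draws, and by arguing that this overhead is lower order and hence absorbed into the $13/n$ constant.
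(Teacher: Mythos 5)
Your treatment of (a) is the same as the paper's (immediate from the construction), and your argument for (b) arrives at the correct bound, though by a slightly different route: you sum over orientations of $S$ and bound each vertex's contribution directly, whereas the paper first bounds the single-edge probability $\pr(e \in E(\hat G_{\tau_2})) \le 1-(1-1/n)^8 \le 8/n$ and then invokes a negative-correlation observation (conditioning on a set of edges being present does not increase the probability that another edge is present). Both routes are valid; yours is marginally more hands-on, the paper's is one line.

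For (c), however, there is a genuine gap, and you flag it yourself: you propose bounding the number of golden edges at a deficit vertex $v$ by $4 - \deg_{\hat G_{\tau_3}}(v)$ plus a ``small overhead from duplicate draws,'' but you never pin down the key fact that would make the whole thing clean. The missing observation is that a vertex $v$ is deficit only because a loop or a parallel edge at $v$ was created in the first two phases, and hence its deficit $4 - \deg_{\hat G_{\tau_3}}(v)$ is at most $2$ (two for a loop, one for a parallel edge). This gives a deterministic cap of two golden out-edges per vertex, so $\hat G_{\tau_4}$ couples as a subgraph of $G_{6\text{-out}} \cup \G(n,0.07n)$. The edge-probability estimate then proceeds exactly as for $\hat G_{\tau_3}$: $\pr(e \in E(\hat G_{\tau_4})) \le 1-(1-1/n)^{12} + (0.14+o(1))/n < 13/n$, followed by the same negative-correlation step. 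Your generic ``$O(1/n)$ per-edge contribution with controllable constant'' and the appeal to a coupon-collector stopping time are not needed once you see the cap of two; without that cap, you are left exactly where you acknowledge being stuck — with an adaptive, randomly stopped process whose contribution you cannot cleanly bound.
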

\begin{proof}
\emph{Part (a)}: The property follows immediately from the construction of our process.

\medskip

\emph{Part (b)}: Recall that $G_{\tau_2}$ is constructed from $G_{2\text{-out}}$ by adding two out edges from every vertex in $V_0$ and one out edge from every vertex in $V_1$. If, instead, two out edges are added from \emph{every} vertex in $G_{2\text{-out}}$, we would get a graph with the same distribution as $G_{4\text{-out}}$. Hence, one may easily couple our process such that $G_{\tau_2}$ is a subgraph of $G_{4\text{-out}}$. 
In order to see that~(\ref{eq:prob_tildeG}) holds, note first that 
$$
\pr \Big( e \in E(\hat G_{\tau_2}) \Big)=\pr \Big( e \in E(G_{\tau_2}) \Big) \le \pr \Big( e \in E(G_{4\text{-out}}) \Big) = 1 - \left( 1 - \frac {1}{n} \right)^8 \le \frac {8}{n}.
$$
The desired inequality holds after observing that $S' \subseteq E(\hat G_{\tau_2})$ does not increase the probability that an edge $e \notin S'$ is also in $E(\hat  G_{\tau_2})$.


\medskip

\emph{Part (c)}: The fact that $\delta(\hat G_{\tau_4}) \ge 4$ follows immediately by construction of $\hat G_{\tau_4}$.  For~\eqn{eq:prob_Gstar}, we note that part (b) implies that our process can be coupled such that $\hat G_{\tau_2}$ is a subgraph of $G_{4\text{-out}}$.  As a result, $\hat G_{\tau_3}$ can be viewed as a subgraph of $G_{4\text{-out}} \cup \G(n,0.07n)$.
Thus, by the union bound we get that
\[
\pr(e\in E(\hat G_{\tau_3})) \le \pr(e\in E(\hat G_{\tau_2})) +\pr(e\in E(\G(n,0.07n)))\le \frac{8}{n}+\frac{0.14+o(1)}{n}<\frac{8.15}{n}.
\]
The assertion follows by noting that the presence of other edges do not increase the probability that $e\in E(\hat G_{\tau_3})$.

In order to see that~(\ref{eq:prob_Gfinal}) holds, we apply the same argument after noting that every vertex sends out at most two golden semi-random edges. As a result, $\hat G_{\tau_4}$ can be viewed as a subgraph of $G_{6\text{-out}} \cup \G(n,0.07n)$.
\end{proof}

The next lemma collects some important properties of the graphs involved in the process that will be used in various places of this paper. In particular, part~(a) immediately implies Lemma~\ref{lem:number_of_edges}.

\begin{lemma} \lab{lem:aas} A.a.s.\ the following properties hold.
\begin{enumerate}
\item[(a)] $D_{\tau_1}$ has asymptotically $e^{-2}n$ vertices of in-degree 0 and $2e^{-2}n$ vertices of in-degree 1. In other words, $|V_0|=(e^{-2}+o(1))n$ and $|V_1|=(2e^{-2}+o(1))n$.
\item[(b)] For every $\eps>0$ there exists $\delta=\delta(\eps)>0$ such that for all $S \subseteq [n]$ with $|S|\le \delta n$, $S$ induces at most $(1+\eps)|S|$ edges in $\hat G_{\tau_4}$. 
\item[(c)] All $S\subseteq [n]$ with $|S|\le 0.005n$ induce at most $1.9|S|$ edges in $\hat G_{\tau_4}$. 
\end{enumerate}
\end{lemma}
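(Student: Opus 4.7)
For part (a), the key observation is that in phase~1 each vertex $v\in[n]$ appears exactly twice as $v_t$ (by construction of the deterministic rule $v_t=(t-1)\bmod n+1$), while each $u_t$ is chosen independently and uniformly at random from $[n]$. Consequently the in-degree of a fixed vertex $v$ in $D_{\tau_1}$ has distribution $\mathrm{Bin}(2n,1/n)$, which is asymptotically $\mathrm{Po}(2)$, giving $\ex|V_0|=(e^{-2}+o(1))n$ and $\ex|V_1|=(2e^{-2}+o(1))n$. To upgrade to an a.a.s.\ statement I would bound $\Var|V_j|=O(n)$ via a short two-point calculation showing that the indicator events $\{\deg^-(u)=j\}$ and $\{\deg^-(v)=j\}$ for distinct $u,v$ are negatively correlated, and then apply Chebyshev's inequality.

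For parts~(b) and~(c) the plan is a first-moment union bound. Using~\eqref{eq:prob_Gfinal}, the expected number of sets $S$ of size $s$ with $e_{\hat G_{\tau_4}}(S)\ge k$ is at most $\binom{n}{s}\binom{\binom{s}{2}}{k}(13/n)^k$, and substituting the usual entropy bounds $\binom{n}{s}\le(en/s)^s$ and $\binom{\binom{s}{2}}{k}\le(es^2/(2k))^k$ with $k=(1+\eps)s$ and $\alpha=s/n$ packages the bound as $f_{1+\eps}(\alpha)^s$, where
\[
\log f_{1+\eps}(\alpha)=1+(1+\eps)\log\frac{13e}{2(1+\eps)}+\eps\log\alpha .
\]
For part~(b) the crucial term $\eps\log\alpha\to-\infty$ as $\alpha\to 0$ forces $f_{1+\eps}(\delta)<1$ once $\delta=\delta(\eps)$ is chosen small enough, and each term of the sum over $s\le\delta n$ is then exponentially small in $s$; the very-small-$s$ regime is absorbed by the even cruder bound $O(n^{s-\lceil(1+\eps)s\rceil})=o(1)$ per $s$.

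Part~(c) is where the main work lies, because the factor $1.9$ is close to the average density $2+4e^{-2}+0.07\approx 2.61$ of $\hat G_{\tau_4}$ from Lemma~\ref{lem:number_of_edges}, and the coarse probability bound $13/n$ is not tight enough: numerically, $f_{1.9}(0.005)\approx 1.6>1$. My remedy is to exploit property~(E), which says that a.a.s.\ at most $\ln\ln n$ golden edges are added in phase~4. Consequently, for any $S$ with $|S|=s\ge 100\ln\ln n$, the hypothesis $e_{\hat G_{\tau_4}}(S)\ge 1.9s$ yields $e_{\hat G_{\tau_3}}(S)\ge(1.9-o(1))s$, and rerunning the first-moment computation with the sharper probability $8.15/n$ from~\eqref{eq:prob_Gstar} in place of $13/n$ gives $f_{1.9}(0.005)\approx 0.66<1$ numerically, hence exponential decay in $s$. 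The residual range $s<100\ln\ln n$ is harmless since the trivial bound $n^{s-\lceil1.9s\rceil}\cdot\mathrm{poly}(s)$ is $o(1)$ per $s$ and summable (with $s\le 4$ automatic because $\binom{s}{2}<1.9s$). The delicate point I expect to have to verify carefully is the numerical inequality $f_{1.9}(0.005)<1$ under the $8.15/n$ bound, and checking that the slack is comfortably large enough to absorb the $o(1)$ correction coming from the $\ln\ln n$ golden edges.
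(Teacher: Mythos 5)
Your proposal follows essentially the same route as the paper's proof. Part (a) is the same Poisson/second-moment argument; part (b) is the same first-moment union bound with the $13/n$ estimate from \eqn{eq:prob_Gfinal}, exploiting $\alpha^{\eps}\to 0$ as $\alpha\to 0$; and for part (c) you correctly identified the key obstacle — that the $13/n$ bound is numerically too weak at density $1.9$ — and resolved it exactly as the paper does, by invoking property {\tt E} to discount the $\le\ln\ln n$ golden edges and rerunning the first moment on $\hat G_{\tau_3}$ with the sharper $8.15/n$ from \eqn{eq:prob_Gstar}. The only cosmetic difference is the cut point for the small-$s$ regime in (c): you split at $100\ln\ln n$ with a crude bound below it, whereas the paper splits at $\ln n$ and lowers the edge-count threshold there to $1.2s$ (since golden edges can constitute up to $2s/3$ of the induced edges when $s\le\ln n$); both choices work.
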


\begin{proof}
\emph{Part (a)}: Let $v \in [n]$ be any vertex of $D_{2n}=D_{\tau_1}$. Clearly,
\begin{eqnarray*}
\pr ( \deg^-(v) = 0 ) &=& \left( 1 - \frac {1}{n} \right)^{2n} = e^{-2} + o(1) \\
\pr ( \deg^-(v) = 1 ) &=& (2n) \cdot \frac {1}{n} \cdot \left( 1 - \frac {1}{n} \right)^{2n-1} = 2e^{-2} + o(1).
\end{eqnarray*}
It follows that $\ex (|V_0|) = (e^{-2}+o(1)) n$ and $\ex (|V_1|) = (2e^{-2}+o(1)) n$. It is straightforward to show the concentration for these random variables (for example, by using the second moment method; we omit details) and so part (a) holds.

\medskip

\emph{Part (b)}: 
Let us fix $\eps>0$ and $s = s(n) \in \bbN$. By~(\ref{eq:prob_Gfinal}), the expected number of sets $S \subseteq [n]$ with $|S|=s$ that induce at least $(1+\eps)s$ edges in $\hat G_{\tau_4}$ is at most
\begin{eqnarray*}
g(s) &:=& {n \choose s} {{s \choose 2} \choose (1+\eps)s} \left( \frac {13}{n} \right)^{(1+\eps)s} \le \left( \frac {en}{s} \right)^s \left( \frac {es^2/2}{(1+\eps)s} \right)^{(1+\eps)s} \left( \frac {13}{n} \right)^{(1+\eps)s} \\
&=& \left( \frac {e^{2+\eps} 6.5^{1+\eps}}{(1+\eps)^{1+\eps}} \left( \frac {s}{n} \right)^\eps \right)^s  \le \left( 6.5e^2 \left( \frac {6.5es}{n} \right)^\eps \right)^s.
\end{eqnarray*}
Clearly, 
$$
g(s) \le \left( 6.5e^2 \left( 6.5e \delta \right)^\eps \right)^s \le (1/2)^s,
$$
provided that $s \le \delta n$ and $\delta = \delta(\eps) > 0$ is sufficiently small (the optimal value of $\delta$ is $(13e^2)^{-1/\eps}/(6.5e)$). On the other hand, if (for example) $s \le \ln n$, then $g(s) \le n^{-\eps s/2} \le n^{-\eps/2}$. It follows that the expected number of sets $S \subseteq [n]$ with $|S| \le \delta n$ that induce at least $(1+\eps)|S|$ edges is at most
$$
\sum_{s =1}^{\delta n} g(s) \le \sum_{s=1}^{\ln n} n^{-\eps/2} + \sum_{s=\ln n}^{\delta n} (1/2)^s \le (\ln n) n^{-\eps/2} + 2 (1/2)^{\ln n} = o(1).
$$
Part (b) holds by Markov's inequality. 




\medskip

\emph{Part (c)}:  For a given $s=s(n) \in \bbN$, let $X_s$ be number of sets $S \subseteq [n]$ with $|S|=s$ that induce at least $1.9s$ edges in $\hat G_{\tau_4}$, and let $Y_s$ be the number of sets $S \subseteq [n]$ with $|S|=s$ that induce at least $t(s)$ edges in $\hat G_{\tau_3}$, where
\[
t(s)=\left\{
\begin{array}{ll}
1.2s & \mbox{if $s\le \ln n$}\\
1.89s & \mbox{if $s>\ln n$}.
\end{array}
\right.
\]
As a.a.s.\ $\hat G_{\tau_4}\in {\tt E}$, it follows that a.a.s.\  $X_s\le Y_s$ for all $s$, since the number of golden edges induced by $S$ is at most  $\min\{2s/3, \ln\ln n\} \le \min\{0.7s,\ln\ln n\}$, given $\hat G_{\tau_4}\in {\tt E}$, and $1.9s-\ln\ln n\ge 1.89s$ when $s>\ln n$.  Let $g(s)=\ex(Y_s)$. By~(\ref{eq:prob_Gstar}), we get that
\begin{eqnarray*}
g(s) &\le& {n \choose s} {{s \choose 2} \choose t(s)} \left( \frac {8.15}{n} \right)^{t(s)} \le \left( \frac {en}{s} \right)^s \left( \frac {es^2/2}{t(s)} \right)^{t(s)}  \left( \frac {8.15}{n} \right)^{t(s)}.
\end{eqnarray*}
If $s \le \ln n$, then $g(s) \le n^{-0.8}$. On the other hand, if $\ln n<s \le \ \delta n$ with $\delta = 0.005$, then 
$$
g(s) \le \left( e \left( \frac {8.15e}{3.78} \right)^{1.89} \delta^{0.89} \right)^s < 0.7^s.
$$
It follows that the expected number of sets $S \subseteq [n]$ with $|S| \le \delta n$ that induce at least $1.9|S|$ edges in $\hat G_{\tau_4}$ is at most
$$
\sum_{s=1}^{\delta n} g(s)\le \sum_{s=1}^{\ln n} n^{-0.8} + \sum_{s=\ln n}^{\delta n} 0.7^s  = o(1).
$$
Part (c) holds by Markov's inequality. 
\end{proof}

\subsection{Proof of Lemma~\ref{lem:Ham}}\label{sec:upper_final}

The whole subsection is devoted to prove Lemma~\ref{lem:Ham}. In order to achieve it, we will use a powerful proof technique introduced by Pos\'{a} in~\cite{Posa}. Suppose that $F$ is a 2-matching (that is, a collection of vertex-disjoint paths and cycles) of $\hat G_{\tau_4}\subseteq G^*$ with $o(n)$ components. We will use Pos\'{a} rotations to extend a path in $\hat G_{\tau_4}$ to longer and longer paths, and eventually extend a Hamilton path to a Hamilton cycle, by adding $o(n)$ extra semi-random edges. During the process of extending the paths, we will use edges in $\hat G_{\tau_4}$ whenever possible. If no edges in $\hat G_{\tau_4}$ are of help, then we will use semi-random edges where we strategically choose $v_t$ to help us with the extension of the paths. 

We start from a path $P=u_1u_2\ldots u_h$ in $F$. If $F$ is a collection of cycles, then we arbitrarily take a cycle and let $P$ be the path obtained by deleting an arbitrary edge in that cycle. Given a path $P$ and an edge $u_h u_j$, $1<j<h-1$ we can create another path of length $h$, namely, $P'=u_1u_2\ldots u_j, u_h, u_{h-1}, \ldots, u_{j+1}$ with a new endpoint $u_{j+1}$. We call this operation a \textbf{Pos\'{a} rotation}. 
Let ${\cal S}$ be the set of paths in $\hat G_{\tau_4}$ on the same set of vertices as $P$ obtained by fixing $u_1$ and performing any sequence of Pos\'{a} rotations on $P$. Let ${\tt End}$ denote the union of the end vertices of paths in ${\cal S}$ other than $u_1$. 

\medskip

Let us independently consider the following two cases:

\medskip

\noindent {\em Case 1: there is $x\in {\tt End}$ and $y\notin V(P)$ such that $xy\in E(\hat G_{\tau_4})$.}  If $y$ is in a cycle $C$ in $F$, then we can extend $P$ to a longer path on $V(P)\cup V(C)$. On the other hand, if $y$ is in a path $P'$ in $F$, then without loss of generality we may assume that $P'=v_1v_2\ldots v_{\ell} \ldots, v_{h}$ with $v_{\ell}=y$ where $\ell> h/2$. We can now extend $P$ to a longer path on vertex set $V(P)\cup \{v_1,\ldots, v_{\ell}\}$. After that operation, the number of vertex-disjoint paths and cycles remains the same or decreases by one. 

\medskip

\noindent {\em Case 2: for every $x\in {\tt End}$, $N(x)\subseteq V(P)$.} Colour vertices in ${\tt End}$ blue or red as follows. If $u_i\in {\tt End}$ and none of the two neighbours of $u_i$ (or just one neighbour of $u_i$ if $i=h$) on $P$ are in ${\tt End}$, then colour $u_i$ red; otherwise, colour it blue. Let us start with the following observation about red vertices.

\begin{claim}
Let $U$ denote the set of red vertices in ${\tt End}$. Then $U$ induces an independent set in $\hat G_{\tau_4}$.
\end{claim}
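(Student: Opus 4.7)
Suppose for contradiction that $x = u_i$ and $y = u_j$ are distinct red vertices in $U$, with $i < j$, and that $xy \in E(\hat G_{\tau_4})$. My goal is to produce an element of $\{u_{j-1}, u_{j+1}\}$ (the two $P$-neighbours of $y$) in ${\tt End}$, which immediately contradicts the red status of $y$.

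Since $x \in {\tt End}$, I would fix any path $P_x \in {\cal S}$ with endpoints $u_1$ and $x$, and write it as the sequence $u_1 = w_1, w_2, \ldots, w_h = x$; then let $k$ be the index with $w_k = y$. Because $xy \in E(\hat G_{\tau_4})$, performing a Pos\'{a} rotation on $P_x$ along the edge $xy$ produces a new path in ${\cal S}$ whose other endpoint is $w_{k+1}$, and hence $w_{k+1} \in {\tt End}$. If $w_{k+1} \in \{u_{j-1}, u_{j+1}\}$ we are done; otherwise the edge $y w_{k+1}$ belongs to $E(P_x) \setminus E(P)$, so some rotation in the sequence that produced $P_x$ must have introduced it.

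To handle the remaining case, I would trace through a rotation sequence $P = P^{(0)}, P^{(1)}, \ldots, P^{(r)} = P_x$ and monitor the two edges $u_{j-1}y$ and $yu_{j+1}$ of $P$ incident to $y$. At least one of them must be deleted before reaching $P_x$; at the first such deletion step, $y$ is directly involved, and is either (a) the pivot of that rotation, in which case the new endpoint is exactly $u_{j-1}$ or $u_{j+1}$ and we are done; or (b) the vertex that becomes the new endpoint itself. In case (b), $y$ is the endpoint of some intermediate $P^{(s)}$; since $y$ is interior on $P_x$, there must be a later rotation at which $y$ transitions back from endpoint to interior, and a parallel analysis of that transition again forces $u_{j-1}$ or $u_{j+1}$ into ${\tt End}$, yielding the contradiction.

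\textbf{Main obstacle.} The delicate step is the rotation-sequence bookkeeping of case (b), in particular tracking which edges at $y$ persist while $y$ is serving as a temporary endpoint, and verifying that the transition moving $y$ back to interior status cannot avoid producing one of $u_{j-1}, u_{j+1}$ as a new endpoint. The red property of $x$ should enter symmetrically here: any intermediate rotation that would otherwise produce $u_{i-1}$ or $u_{i+1}$ as an endpoint is forbidden (since such a vertex would then lie in ${\tt End}$, contradicting $x$ being red), which restricts the admissible rotation sequences enough to force the ``good'' outcome above.
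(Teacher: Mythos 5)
Your plan is the paper's argument run in the ``wrong'' direction, and that is exactly where the gap opens up. The paper first orders $x,y$: WLOG $y$ is added to ${\tt End}$ before $x$, and then fixes a path $P'\in\mathcal S$ with $y$ as its non-$u_1$ endpoint whose rotation sequence from $P$ never has $x$ as an intermediate endpoint (this is what ``added before'' buys you, e.g.\ if one builds ${\tt End}$ by BFS over rotations). Under that condition one can show by induction along the rotation sequence that $x$'s path-neighbourhood is preserved: if $x$ were the pivot at some step, one of its two current path-neighbours (which inductively are still $u_{i-1},u_{i+1}$) would become the new endpoint and hence lie in ${\tt End}$, contradicting $x$ being red; and $x$ is never the old or new endpoint by the choice of $P'$. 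Thus $x$'s $P'$-neighbours are $u_{i-1},u_{i+1}$, and rotating $P'$ at pivot $x$ along the chord $xy$ puts one of them into ${\tt End}$, contradiction. The ordering of $x,y$ and the choice to work from $P_y$ (not $P_x$) are what make the bookkeeping trivial.

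By instead fixing $P_x$ and monitoring $y$, you lose that protection: nothing prevents $y$ from appearing as an intermediate endpoint in the sequence producing $P_x$, which is precisely your case~(b). And in case~(b) the subsequent ``parallel analysis'' does not go through. When $y$ later transitions from endpoint (of some $P^{(s)}$) back to interior (of $P^{(s+1)}$), the rotation is performed at some pivot $p''$ that is a $G$-neighbour of $y$; $y$ simply gains $p''$ as a new path-neighbour, and the vertex that becomes the new endpoint is $p''$'s forward neighbour in $P^{(s)}$ — a vertex with no a priori relation to $u_{j-1}$ or $u_{j+1}$. So this transition does not force $u_{j-1}$ or $u_{j+1}$ into ${\tt End}$, and moreover once $y$'s path-neighbourhood has been polluted by a non-$P$ edge in this way, it does not heal in later paths; it can be replaced by yet another non-$P$ edge. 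This is a genuine obstruction, not mere bookkeeping, and you flag it yourself. The repair is the one the paper uses: order $x$ and $y$, take the path that ends at the \emph{earlier} of the two (here $y$), and apply the preservation argument to the \emph{later} one ($x$), which is then guaranteed never to have been an endpoint along the way.

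Two smaller points: your setup tacitly assumes $y$ is not a $P_x$-neighbour of $x$ (i.e.\ $k\ne h-1$), since otherwise $xy\in E(P_x)$ and the Pós\'a rotation at pivot $y$ is not available; and one should check $k\ge 2$, which holds because $u_1\notin{\tt End}$ while $y\in{\tt End}$.
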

\begin{proof}
For a contradiction, suppose that $x,y$ are both red vertices in ${\tt End}$ and $xy$ is an edge in $\hat G_{\tau_4}$. Without loss of generality, suppose that $y$ was added to ${\tt End}$ before $x$ and let $P'$ be the path obtained via Pos\'{a} rotation with $y$ being the other end. Let $x=u_i$. Since $x$ is red, neither $u_{i-1}$ nor $u_{i+1}$ is in ${\tt End}$. Thus, the two neighbours of $x$ on $P'$ must be $u_{i-1}$ and $u_{i+1}$. But then we can get another path on $V(P)$ via Pos\'{a} rotation on $P'$ where one of $u_{i-1}$ and $u_{i+1}$ becomes an end vertex. This contradicts with the fact that $x$ is red. It follows that $U$ must be an independent set in $G^*$.
\end{proof}

By the usual argument of Pos\'{a} rotation, for every $u_i\in N({\tt End})$, we must have 
$$
\{u_{i-1},u_{i+1}\}\cap {\tt End}\neq \emptyset. 
$$
In particular, it implies that $|N({\tt End})| < 2|{\tt End}|$. However, using the above claim, we get a slightly stronger bound. Let $x_1$ and $x_2$ be the number of red and, respectively, blue vertices in ${\tt End}$. Since the set of red vertices in ${\tt End}$ induces an independent set in $\hat G_{\tau_4}$, it follows that
\begin{equation}
|N({\tt End})|\le 2x_1+x_2-1 = |{\tt End}|+x_1-1. \lab{x12}
\end{equation}

Our next task and the main ingredient of the proof of the lemma is the next claim.

\begin{claim} \lab{claim:linear}
$|{\tt End}|=\Omega(n)$.
\end{claim}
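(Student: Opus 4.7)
The plan is to argue by contradiction. Suppose $|{\tt End}| = o(n)$, and set $S = {\tt End}$ and $T = S \cup N(S)$, where all neighbourhoods are taken in $\hat G_{\tau_4}$. By the Case 2 hypothesis every neighbour of a vertex in $S$ lies in $V(P)$, and hence in $T$, so by~\eqref{x12} we have $|T| = |S| + |N(S)| \le 2|S| + x_1 - 1 \le 3|S| = o(n)$. I fix a small parameter $\eps \in (0, 1/14)$ and let $\delta = \delta(\eps)$ be the constant supplied by Lemma~\ref{lem:aas}(b); for $n$ large enough, $|T| \le \delta n$ and the lemma yields both $e(S) \le (1+\eps)|S|$ and $e(T) \le (1+\eps)|T| \le (1+\eps)(3 x_1 + 2 x_2 - 1)$.

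Next, I would establish two independent lower bounds on $e(T)$. First, since $\delta(\hat G_{\tau_4}) \ge 4$ by Observation~\ref{obs}(c) and every neighbour of $S$ lies in $T$, summing the degrees of vertices in $S$ gives $2e(S) + e(S, N(S)) \ge 4|S|$, and therefore
\[
e(T) \ge e(S) + e(S, N(S)) \ge 4|S| - e(S) \ge (3-\eps)|S|.
\]
Second, by the preceding claim the red set $R \subseteq S$ of size $x_1$ is independent, and each red vertex has at least $4$ neighbours all lying in $T \setminus R$, so $e(T) \ge e(R, T \setminus R) \ge 4 x_1$.

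Combining each lower bound with the upper bound, I split into two cases depending on whether $x_1 \le \tfrac{3-\eps}{1+\eps}\, x_2$ (the first bound dominates) or $x_1 > \tfrac{3-\eps}{1+\eps}\, x_2$ (the second dominates). A direct expansion shows that both cases reduce to the single inequality
\[
(1 - 14\eps + \eps^2)\, x_2 \le -(1+\eps)^2,
\]
whose left-hand coefficient is positive for $\eps < 1/14$ while the right-hand side is negative, so no $x_2 \ge 0$ can satisfy it. This contradicts the standing assumption $|{\tt End}| = o(n)$, giving $|{\tt End}| > \delta n / 3 = \Omega(n)$.

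The main obstacle I anticipate is that neither lower bound on $e(T)$ individually suffices: the all-degree bound $(3-\eps)|S|$ is tight when $S$ is mostly blue (so $|T|$ is close to $2|S|$), while the red-independence bound $4 x_1$ is tight when $R$ is most of $S$ (so $|T|$ grows toward $3|S|$). Only by playing the two off against each other does the quadratic $1 - 14\eps + \eps^2$ emerge, forcing the choice $\eps < 1/14$; in particular, the argument crucially relies on the sharp $(1+\eps)$-density of Lemma~\ref{lem:aas}(b) and on the extra factor $x_1 - 1$ improvement in~\eqref{x12}, rather than the cruder $1.9|S|$ bound of Lemma~\ref{lem:aas}(c).
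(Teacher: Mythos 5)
Your proof is correct, and it takes a genuinely different route from the paper's. The paper works directly on $S={\tt End}$ and its partitioned neighbourhood $\mathcal{N}_i$: it first proves an expansion lower bound $|N(S)|\ge(2-\eps)|S|$ via the degree and sparsity conditions, uses that together with~\eqref{x12} to show $x_1\ge(1-\eps)|S|-1$ (so almost every end-vertex is red), then introduces the auxiliary set $X_1'$ of red vertices with two blue neighbours, and finally derives the contradiction $|N(S)|>2.1|S|$ against $|N(S)|<2|S|$. You instead pass to the closed neighbourhood $T=S\cup N(S)$, sandwich $e(T)$ between the sparsity upper bound $(1+\eps)(3x_1+2x_2-1)$ and the maximum of the two lower bounds $(3-\eps)|S|$ (degree sum over $S$ minus the sparse $e(S)$) and $4x_1$ (degree sum over the red independent set), and then eliminate $x_1$ at the crossover $x_1=\tfrac{3-\eps}{1+\eps}x_2$ so that both cases collapse to $(1-14\eps+\eps^2)x_2\le-(1+\eps)^2$, impossible for $\eps<1/14$. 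I checked the arithmetic in both cases and it does reduce to the same quadratic. Your version is somewhat shorter and avoids the intermediate subclaims (the $\sum n_i$ estimate and the $|X_1'|\le1.3\eps|S|$ bound); it also permits a slightly larger working $\eps$ (anything below $7-4\sqrt3\approx0.072$) than the paper's $\eps=0.05$, though the constant is immaterial for the $\Omega(n)$ conclusion. Both arguments rely on the same prior ingredients: the independence of the red set, inequality~\eqref{x12}, Lemma~\ref{lem:aas}(b), and $\delta(\hat G_{\tau_4})\ge4$. One small cosmetic remark: your appeal to the Case~2 hypothesis ``every neighbour of $S$ lies in $V(P)$'' is not actually needed to conclude $N(S)\subseteq T$ (that is true by definition of $N(S)$); the Case~2 hypothesis is what places us in this branch of the argument in the first place, but the set-theoretic containment you use is automatic.
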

\begin{proof}
In order to simplify the notation, let $S={\tt End}$.
Let $\eps_0>0$ be a sufficiently small constant that will be determined soon. We will show that $|S| \ge \eps_0 n$. For a contradiction, suppose that $|S|<\eps_0 n$, and let 
$$
{\cal N}_i=\{x\notin S:\ e(\{x\}, S)=i\}, \qquad \qquad
n_i = |{\cal N}_i|.
$$

\begin{claim} For every $0<\eps\le 1$, 
$\sum_{i\ge 1} n_i \ge (2-\eps) |S|$, provided $\eps_0=\eps_0(\eps)$ is sufficiently small.
\end{claim}

\noindent Indeed, by Lemma~\ref{lem:aas}(b) applied with $\eps'=\eps/2$ and $S \cup N(S)$, we get that a.a.s.\ 
$$
e\left( S \cup \bigcup_{i \ge 2} {\cal N}_i \right) \le (1+\eps') \left|S \cup \bigcup_{i \ge 2} {\cal N}_i \right|,
$$ 
provided $\eps_0$ is sufficiently small. It follows that
\begin{equation}
e(S) +\sum_{i\ge 2} i n_i  \le (1+\eps')\left(|S|+\sum_{i\ge 2} n_i\right).\label{e(S)}
\end{equation}
On the other hand, by Observation~\ref{obs}(c), $\delta(\hat G_{\tau_4}) \ge 4$ and so
$$
2e(S) + \sum_{i\ge 1} n_i \ge 4|S|.
$$
Substituting $
2e(S)\le (2+2\eps')|S|+\sum_{i\ge 2}(2+2\eps'-2i)n_i$ from~\eqn{e(S)}
into the above yields 
\[
(2-2\eps')|S|\le n_1+\sum_{i\ge 2}(2+2\eps'-2i+1) n_i
\]
By the definition of $\eps'$ and as $\eps'\le 1/2$,
we get
\begin{eqnarray}
(2-\eps)|S|&=&(2-2\eps')|S|\le n_1+\sum_{i\ge 2}(2+\eps-2i+1) n_i \label{SS}\\
&\le& n_1\le \sum_{i\ge 1} n_i. \lab{S}
\end{eqnarray}
This finishes the proof of the claim.

\medskip

We apply the above claim with $\eps=0.05$ so we may assume that 
\begin{equation}
|N(S)|\ge (2-\eps)|S|\quad \mbox{if $|S|\le \eps_0 n$.}\lab{N(S)}
\end{equation}
By~\eqn{x12} and~\eqn{N(S)},
\[
(2-\eps)|S|\le |S|+x_1-1,
\]
and hence
\[
(1-\eps)|S| \le x_1-1.
\]
Let $X_1$ denote the set of red vertices and let $X_2$ be the set of blue vertices in $S$.
Let $X_1'\subseteq X_1$ be the set of red vertices with at least 2 blue neighbours. 
\begin{claim}
$|X'_1|\le 1.3\eps|S|.$
\end{claim} 

\noindent Indeed, consider the subgraph of $G$ induced by $Y=X'_1\cup X_2$. By the definition of $X'_1$, $Y$ induces at least $2|X'_1|$ edges. Hence, $2|X'_1|\le 1.1 (|X'_1|+|X_2|)$. As $x_2<\eps|S|$, we have $|X_1'|\le (1.1/0.9)\eps|S|< 1.3 \eps |S|$, which finishes the proof of the claim.

\medskip

Therefore, every vertex in $X_1\setminus X_1'$ has at least 3 neighbours in $\overline{S}$. Thus, $e(S, \overline{S})\ge 3|X_1\setminus X_1'|\ge 3((1-\eps)|S|-1.3\eps |S|) \ge 3(1-2.3\eps)|S|$. That is,
\begin{equation}
\sum_{i\ge 1} in_i \ge (3-6.9\eps)|S|. \lab{S-Sbar}
\end{equation}
By~\eqn{SS} and noting that $2i-1\ge i$ for every $i\ge 2$, 
\[
(2-\eps)|S|\le n_1 + (2+\eps)\sum_{i\ge 2} n_i -\sum_{i\ge 2} in_i.
\]
Plugging the lower bound for $\sum_{i\ge 1} in_i $ from~\eqn{S-Sbar} yields
\[
(2+\eps)\sum_{i\ge 1}n_i\ge n_1+(2+\eps)\sum_{i\ge 2} n_i \ge (2-\eps)|S| +(3-6.9\eps)|S| = (5-7.9\eps) |S|.
\]
Thus,
\[
|N(S)|=\sum_{i\ge 1}n_i \ge \frac{5-7.9\eps}{2+\eps} |S| >2.1 |S|, 
\]
as $\eps<0.05$.
This contradicts with $|N(S)|\le |S|+x_1-1<2|S|$. 
It follows then that $|S|\ge \eps_0 n$. 
\end{proof}

Now, it is straightforward to finish the proof of Lemma~\ref{lem:Ham}.

\begin{proof}[Proof of Lemma~\ref{lem:Ham}]
We extend $P$ whenever possible, and if it is not possible, then $|V(P)|\ge \eps_0 n$ by Claim~\ref{claim:linear}. The vertices outside of $P$ are in a collection $\cal F$ of $o(n)$ paths and cycles. Let $v_t$ be an arbitrary vertex outside of $P$ that is either an end vertex of a path, or any vertex in a cycle.  If the semi-random process selects a vertex $u_t\in {\tt End}$ then, by performing Pos\'{a} rotations, we extend $P$ to a longer path by absorbing a path or a cycle in $\cal F$ that was not in $P$. The number of components in $\cal F$ goes down by 1. Otherwise, we simply ignore $u_t$ and $v_t$, and repeat until eventually $u_t\in {\tt End}$. Since $|{\tt End}|\ge \eps_0 n$, the probability that $u_t\in {\tt End}$ is at least $1/\eps_0$. Hence, it takes $O(1)$ trials on average to absorb a path or a cycle. Since there are only $o(n)$ paths or cycles to be absorbed, it follows immediately from Chernoff bound that a.a.s.\ an additional $o(n)$ edges are enough to be added to $G^*$ to make it Hamiltonian.
\end{proof}

\subsection{Preparation for the Proof of Lemma~\ref{lem:2-matching}}\label{sec:upper_2-matching_preparation}

Our aim now is to prove that $G^*$ has a 2-matching with $o(n)$ components.  In order to achieve it, we will apply the consequence of the Tutte-Berge matching formula~\cite[Theorem~30.7]{Shrijver} to $\hat G_{\tau_4}$, which is a simple graph and a subgraph of $G^*$.  However, we need one more definition before we can state it. 

Given a simple graph $G$, let $\kappa(G)$ be the number of edges in a maximum 2-matching of $G$ (that is, $\kappa(G)$ is the \textbf{size} of a maximum 2-matching). The Tutte-Berge matching theorem implies the following.

\begin{thm}\label{thm:Tutte-Berge}
Let $G$ be a simple graph on the vertex set $[n]$. Then, 
\[
\kappa(G)=\min\left\{n+ |U| - |S| +\sum_{X} \floor{\frac{e(X,S)}{2}}\right\},
\]
where $U$ and $S$ are disjoint subsets of $[n]$, $S$ is an independent set, and $X$ ranges over the components of $G-U-S$.
\end{thm}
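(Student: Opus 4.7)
My plan is to establish both directions of the min-max identity separately.

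For the easy direction $\kappa(G) \le \text{RHS}$: I would fix any 2-matching $M$ and any admissible pair $(U, S)$ (i.e., $U \cap S = \emptyset$ and $S$ independent), and proceed by direct degree counting. Partition the edges of $M$ by where their endpoints lie among $U$, $S$, and the components $X$ of $G - U - S$. Because $S$ is independent and different components have no edges between them, writing $M_A$ for $M$-edges inside $A$ and $M_{AB}$ for $M$-edges between $A$ and $B$, we have $|M| = |M_U| + |M_{US}| + \sum_X(|M_{UX}| + |M_{SX}| + |M_X|)$. The degree bound at $U$ gives $2|M_U| + |M_{US}| + \sum_X|M_{UX}| \le 2|U|$, and the degree bound inside each $X$ gives $2|M_X| + |M_{UX}| + |M_{SX}| \le 2|X|$. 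Since $|M_X| + |M_{SX}|$ is an integer and $|M_{SX}| \le e(X, S)$, these combine (via $|M_X| \le |X| - (|M_{UX}|+|M_{SX}|)/2$ and integer rounding) to give $|M_X| + |M_{SX}| \le |X| + \floor{e(X, S)/2}$ for each $X$. Adding everything up (and using $\sum_X |X| = n - |U| - |S|$) yields $|M| \le 2|U| + \sum_X(|X| + \floor{e(X,S)/2}) = n + |U| - |S| + \sum_X \floor{e(X, S)/2}$, as required.

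For the hard direction $\kappa(G) \ge \text{RHS}$, i.e., that the minimum is attained, I would reduce to the classical Tutte-Berge matching formula through a vertex-splitting gadget construction. Specifically, I would build an auxiliary graph $H$ by replacing each vertex $v \in V(G)$ with two ``slot'' copies $v^1, v^2$, together with appropriate connecting edges, so that maximum 2-matchings of $G$ are in bijection with maximum matchings of $H$ (up to a known additive term). Applying the Tutte-Berge formula to $H$ then yields an optimal Tutte set $T \subseteq V(H)$. Pulling $T$ back to $V(G)$ — declaring $U$ to consist of vertices both of whose copies lie in $T$, and $S$ to consist of vertices with exactly one copy in $T$ — produces a candidate pair $(U, S)$. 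The gadget is chosen so that $S$ is independent in $G$ (else one could augment the matching in $H$, contradicting optimality of $T$) and so that the odd components of $H - T$ correspond to the components $X$ of $G - U - S$, with the parity of matching edges entering $X$ across its $S$-boundary reproducing exactly the floor term $\floor{e(X, S)/2}$.

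The principal obstacle I expect is calibrating the gadget so that the Tutte-set pullback is sharp on both counts: that $S$ emerges genuinely independent in $G$, and that the correspondence between odd components of $H - T$ and the floor terms is an equality rather than merely an inequality in one direction. If the direct gadget proves too intricate, I would instead invoke Schrijver's Theorem~30.7 as cited, since the stated formula is essentially the $b$-matching version of the Tutte-Berge identity specialized to $b \equiv 2$.
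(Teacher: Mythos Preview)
The paper does not actually prove this theorem: it is quoted as a known result, attributed to Schrijver~\cite[Theorem~30.7]{Shrijver} (the $b$-matching min--max formula specialized to $b\equiv 2$), and then applied as a black box. So your fallback plan of invoking that reference is precisely what the paper does.

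On the substance of your sketch: the easy direction is correct. The only points worth making explicit are that $|M_U|+|M_{US}|+\sum_X|M_{UX}|\le 2|M_U|+|M_{US}|+\sum_X|M_{UX}|\le 2|U|$ (so the ``edges touching $U$'' block really is at most $2|U|$), and that in the per-component estimate you may simply drop the nonpositive term $-|M_{UX}|/2$ before applying $|M_{SX}|\le e(X,S)$ and rounding. For the hard direction, reducing to the ordinary Tutte--Berge formula via a vertex-splitting gadget is the standard way to derive this case by hand; the verification that an optimal Tutte set in the auxiliary graph pulls back to an \emph{independent} $S$ and reproduces the floor terms exactly is where the work lies, and you have correctly identified that as the crux. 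Since the paper treats the theorem as background, either route (citation or gadget) is appropriate.
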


Despite the fact that the above theorem provides the exact value for $\kappa(G)$, it is not so easy to apply it in the context of random graphs. Fortunately, if $G$ belongs to some family of graphs, then we get an easier property to check. We will first define the family, then prove a weaker but more workable statement, and finally show that a.a.s.\ $G^*$ belongs to the family.

\medskip

Let $\cyclic$ be the family of graphs on the vertex set $[n]$ which satisfy the following properties: there are at most $n/\ln n$ subsets $S \subseteq [n]$ with $|S| \le \ln n/10$ such that $S$ induces a connected subgraph with at least the same number of edges as the number of vertices; that is, $G[S]$ is connected and $|E(G[S])| \ge |S|$. 

\begin{cor}\label{cor:cyclic}
Suppose $G\in \cyclic$ and all 2-matchings of $G$ have more than $\gamma$ components for some $\gamma\ge 0$. Then, $G$ has vertex partition $[n]=S\cup T\cup R\cup U$ such that
\begin{itemize}
\item[(a)] $S$ is an independent set and $G[T]$ is a forest;
\item[(b)] $|S|\ge \max\{|U|, \gamma-11n/\ln n\}$;
\item[(c)] $e(S\cup T)+e(S\cup T, R)\le |T|+2|S|-2|U|-2\gamma + 33 n/\ln n$;
\item[(d)] $e(R,T)=0$.
\end{itemize}
\end{cor}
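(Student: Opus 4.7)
The plan is to apply Theorem~\ref{thm:Tutte-Berge} to obtain a minimizing pair $(U^*, S^*)$ and then classify the components of $G - U^* - S^*$. Write $X_1, \ldots, X_r$ for these components and fix a maximum 2-matching $H^*$; since $H^*$ decomposes into $n - \kappa(G)$ path components (counting isolated vertices as paths) and some number $c^*$ of cycle components, the hypothesis that every 2-matching has more than $\gamma$ components, combined with the Tutte-Berge equality, yields
\[
\sum_i \lfloor e(X_i, S^*)/2 \rfloor \;\le\; |S^*| - |U^*| - \gamma - 1 + c^*.
\]
I will set $S := S^*$, $U := U^*$, and take $T$ and $R$ to be the unions of the tree and, respectively, cyclic components of $G - U - S$ (where ``cyclic'' means $|E(G[X_i])| \ge |X_i|$). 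Then (a) is immediate from the Tutte-Berge setup and the definition of $T$, while (d) holds because $T$ and $R$ are unions of distinct components, so no edges of $G$ cross between them.

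The hypothesis $G \in \cyclic$ is exploited through a two-regime count: every cyclic subset of $[n]$ of size at most $\ln n/10$ is one of the at most $n/\ln n$ subsets counted by the definition of $\cyclic$, while cyclic subsets of size larger than $\ln n/10$ must be vertex-disjoint and hence number at most $10n/\ln n$. Applied to the cyclic components of $G - U - S$, this gives at most $11n/\ln n$ of them; applied to the cycles of $H^*$ (which are vertex-disjoint and whose vertex sets induce cyclic subgraphs of $G$), it gives $c^* \le 11n/\ln n$. For (b), the inequality $|S| \ge |U|$ comes from rearranging the Tutte-Berge equality using the trivial bound $\kappa(G) \le n$ together with non-negativity of the floor terms, while $|S| \ge \gamma - 11n/\ln n$ comes from dropping the floor sum in the displayed inequality and inserting $c^* \le 11n/\ln n$.

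For (c), expand
\[
e(S \cup T) + e(S \cup T, R) \;=\; e(T) + \sum_i e(X_i, S),
\]
using $S$ independent and $e(R, T) = 0$. The plan is to apply the forest bound $e(T) \le |T| - c_T$, where $c_T$ counts tree components, together with the parity relation $\sum_i e(X_i, S) = 2 \sum_i \lfloor e(X_i, S)/2 \rfloor + r_{\mathrm{odd}}$, where $r_{\mathrm{odd}}$ counts components with odd $e(X_i, S)$. The expected main difficulty is handling $r_{\mathrm{odd}}$, which may a priori be linear in $n$; the idea is to split $r_{\mathrm{odd}}$ according to whether the odd-parity component is tree or cyclic, absorb the tree part into the $-c_T$ savings from the forest bound (since the number of tree components with odd parity is trivially at most $c_T$), and bound the cyclic part by $11n/\ln n$ using the previous paragraph. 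Combining with the $2c^* \le 22n/\ln n$ slack from substituting the displayed inequality yields a total error of at most $33n/\ln n$, matching the bound required in (c).
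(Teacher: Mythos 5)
Your proof is correct and follows essentially the same route as the paper: invoke Tutte--Berge with a minimizing pair, set $T$ and $R$ to be the unions of the tree and cyclic components, bound the number of cycle components of the maximum 2-matching and of cyclic components of $G-U-S$ via the two-regime count afforded by $G\in\cyclic$, and assemble (c) by combining the forest bound $e(T)=|T|-c_T$ with the floor-sum estimate. The only differences are cosmetic: the paper first converts the component count into the single inequality $\kappa(G)\le n+11n/\ln n-\gamma$ and then uses $\lfloor e(X,S)/2\rfloor \ge (e(X,S)-1)/2$ component by component, whereas you carry $c^*$ and the odd-parity count $r_{\mathrm{odd}}$ explicitly, but the bookkeeping is interchangeable and leads to the same $33n/\ln n$ error term.
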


\begin{proof}
The proof is almost identical to that in~\cite{3-out} so we only briefly sketch the argument here. Let $F$ be a maximum 2-matching of $G$. Since $G\in \cyclic$, the number of cycles in $F$ is at most $n/\ln n+n/(\ln n/10)=11n/\ln n$. Let $c(F)$ and $e(F)$ denote the numbers of components and, respectively, edges in $F$. Then, 
$$
\gamma \le c(F)\le n-e(F)+11n/\ln n=n+11n/\ln n-\kappa(G). 
$$
Thus, $\kappa(G)\le n+11n/\ln n-\gamma$.

Let $S$ and $U$ be a pair of disjoint subsets of $[n]$ that minimize 
\begin{equation}
n+ |U| - |S| +\sum_{X\in {\mathcal C}} \floor{\frac{e(X,S)}{2}},\label{TB}
\end{equation}
where ${\mathcal C}$ is the set of components of $G-U-S$, and $S$ is an independent set. Let $T$ be the union of  components of $G-U-S$ that are trees, and $R=[n]-U-S-T$. By Theorem~\ref{thm:Tutte-Berge} and our earlier observation, we get that $\kappa(G) = n + |U| - |S| +\sum_{X\in {\mathcal C}} \floor{\frac{e(X,S)}{2}} \le n+11n/\ln n-\gamma$, and so
\begin{equation}
|U| - |S| +\sum_{X\in {\mathcal C}} \floor{\frac{e(X,S)}{2}} \le  11n/\ln n-\gamma. \lab{cond}
\end{equation}

By our construction, $[n]=S\cup T\cup R\cup U$ is a partition of the vertex set, and properties (a) and (d) hold. It remains to show that properties (b) and (c) also hold. 
It follows immediately from inequality~\eqn{cond} that 
$$
|S| \ge |U| + \sum_{X\in {\mathcal C}} \floor{\frac{e(X,S)}{2}} + \gamma - 11n/\ln n \ge \gamma - 11n/\ln n,
$$
since $|U| \ge 0$ and $\sum_{X\in {\mathcal C}} \floor{\frac{e(X,S)}{2}}\ge 0$.
On the other hand, by Theorem~\ref{thm:Tutte-Berge} and the fact that $(S,U)$ is chosen such that it minimizes~\eqn{TB}, we have $n\ge \kappa(G) \ge n+|U|-|S|$, which implies $|S|\ge |U|$. This shows that property~(b) holds.  

For part~(c), let $p$ and $q$ denote the number of components in $G[T]$ and, respectively, $G[R]$. Since $G\in\cyclic$, there are at most $n/\ln n$ components in $G[R]$ of order at most $\ln n/10$, and at most $10n/\ln n$ components in $G[R]$ of order greater than $\ln n/10$. It follows that $q\le 11n/\ln n$. Then, 
\[
\sum_{X\in {\mathcal C}} \floor{\frac{e(X,S)}{2}}\ge \frac{(e(S,T)-p)+(e(S,R)-q)}{2}\ge \frac{e(S,T)+e(S,R)-p-11n/\ln n}{2}.
\]
Hence,
\[
11n/\ln n-\gamma\ge |U| - |S| +\sum_{X\in {\mathcal C}} \floor{\frac{e(X,S)}{2}} \ge |U|-|S|+\frac{e(S,T)+e(S,R)-p-11n/\ln n}{2}.
\]
It follows that
\[
e(S,T)+e(S,R) \le 33n/\ln n-2\gamma+2|S|-2|U|+p.
\]
Now condition (c) follows since
\[
e(S\cup T)+e(S\cup T, R)=e(S,T) +e(T) +e(S,R) \le  e(S,T) +e(S,R) + |T|-p, 
\]
as $T$ induces a forest and $e(T,R)=0$.
\end{proof}

\medskip

Let us now show that $\hat G_{\tau_4}$ belongs to the family $\cyclic$ and so Corollary~\ref{cor:cyclic} can be applied.

\begin{lemma}\label{lem:gstar_cyclic}
A.a.s.\ $\hat G_{\tau_4}\in \cyclic$.
\end{lemma}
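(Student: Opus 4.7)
The plan is a straightforward first-moment computation. Let $N$ denote the number of subsets $S \subseteq [n]$ with $|S| \le \ln n/10$ such that $\hat G_{\tau_4}[S]$ is connected and satisfies $|E(\hat G_{\tau_4}[S])| \ge |S|$. Since $\hat G_{\tau_4}$ is a simple graph, any set contributing to $N$ must have size at least $3$. I will bound $\mathbb{E}[N]$ and then apply Markov's inequality to conclude that a.a.s.\ $N < n/\ln n$, which is precisely the statement $\hat G_{\tau_4} \in \cyclic$.

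Fix $s$ with $3 \le s \le \ln n/10$. If a vertex set $S$ of size $s$ contributes to $N$, then $\hat G_{\tau_4}[S]$ contains some spanning tree $T$ of $S$ together with at least one additional edge $e \in \binom{S}{2} \setminus E(T)$. By Cayley's formula, there are at most $s^{s-2}$ labeled trees on any fixed $s$-element vertex set, and given $T$ there are at most $\binom{s-1}{2} \le s^2/2$ choices for $e$. By the edge-probability estimate~\eqref{eq:prob_Gfinal} of Observation~\ref{obs}(c), the probability that a specified set of $s$ edges all lie in $\hat G_{\tau_4}$ is at most $(13/n)^{s}$. Applying the union bound over the triples $(S,T,e)$ and using the elementary estimate $\binom{n}{s} \le n^s/s! \le (ne/s)^s$ gives
\[
\mathbb{E}\!\left[\#\{S : |S|=s \text{ and } S \text{ contributes to } N\}\right] \;\le\; \binom{n}{s}\, s^{s-2}\, \binom{s-1}{2} \left(\frac{13}{n}\right)^{\!s} \;\le\; \frac{(13e)^{s}}{2}.
\]

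Summing this bound over $3 \le s \le \lfloor \ln n/10\rfloor$ and noting that the geometric series is dominated by its largest term,
\[
\mathbb{E}[N] \;\le\; \sum_{s=3}^{\lfloor \ln n/10\rfloor} \frac{(13e)^{s}}{2} \;=\; O\!\left( (13e)^{\ln n/10} \right) \;=\; O\!\left( n^{\ln(13e)/10} \right).
\]
Since $\ln(13e) = 1 + \ln 13 < 3.566$, the exponent $\ln(13e)/10$ is strictly less than $0.357$, and in particular $\mathbb{E}[N] = o(n/\ln n)$. Markov's inequality therefore yields $\Pr(N \ge n/\ln n) = o(1)$, so a.a.s.\ $\hat G_{\tau_4} \in \cyclic$. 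There is no real obstacle here: the calculation is routine, and the only design point is that the cutoff $\ln n/10$ in the definition of $\cyclic$ has been chosen small enough that the growth rate $(13e)^{s}$ from the spanning-tree union bound translates to a polynomial exponent safely below $1$.
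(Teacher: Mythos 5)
Your proof is correct and rests on the same core idea as the paper's: a first-moment count of small vertex sets whose induced subgraph is connected with at least as many edges as vertices, followed by Markov's inequality. You reach the conclusion more directly by applying the edge-probability bound \eqref{eq:prob_Gfinal} for $\hat G_{\tau_4}$ wholesale, obtaining $\mathbb{E}[N] = O\bigl((13e)^{\ln n/10}\bigr) = o(n/\ln n)$.

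The paper's own proof deliberately avoids invoking \eqref{eq:prob_Gfinal} here. Instead it splits the count as $Z \le X + Y$, where $X$ counts sets that are already ``bad'' in $\hat G_{\tau_3}$ (bounded via \eqref{eq:prob_Gstar}, with the sharper constant $8.15$), and $Y$ counts sets that only become bad after phase~4, which under Property {\tt E} must contain a single deficit vertex and at least one golden edge incident to it; this forces an extra factor of $O(1/n)$ and gives $\mathbb{E}[Y]=o(1)$. The reason for this extra care is worth noting: the justification of \eqref{eq:prob_Gfinal} via the coupling with $G_{6\text{-out}} \cup \G(n, 0.07n)$ implicitly assumes every vertex emits at most two golden semi-random edges, which is not a deterministic consequence of the strategy but only holds a.a.s.\ (a golden edge could itself land on an existing neighbour, triggering yet another). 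The paper's decomposition sidesteps this delicacy by using only the cleaner $\hat G_{\tau_3}$ bound plus Property {\tt E}, which rigorously controls the number and location of golden edges. Since \eqref{eq:prob_Gfinal} is stated and used elsewhere in the paper (e.g.\ Lemma~\ref{lem:aas}(b)), your shorter route is defensible within the paper's framework, but if you wanted a proof insulated from that subtlety you would want to mirror the paper's two-step decomposition.
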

\begin{proof}
Let ${\cal Z}$ be the family of sets $S$ with $|S|\le \ln n/10$ where $S$ induces a connected subgraph of $\hat G_{\tau_4}$ with at least $|S|$ edges, and let $Z=|{\cal Z}|$. We will show that $\ex [Z]=o(n/\ln n)$ which proves the lemma as it implies that $Z \le n/\ln n$ by Markov's inequality.

For a given $S \subseteq [n]$ with $|S| \le \ln n /10$, let $X_S$ be the indicator random variable that $S$ induces a connected subgraph of $\hat G_{\tau_3}$ with at least $|S|$ edges. Let $X = \sum_{S: 3 \le |S|\le\ln n/10} X_S$. 
It follows that
\begin{eqnarray*}
\ex [X] &\le& \sum_{s=3}^{\ln n / 10} {n \choose s} s^{s-2} {s \choose 2} \left( \frac {8.15}{n} \right)^s \le \sum_{s=3}^{\ln n / 10} ( 8.15e)^s = O \left( (8.15e)^{\ln n/10} \right) \\
&=& O(n^{0.36}) = o(n/\log n).
\end{eqnarray*}
(Indeed, there are $n \choose s$ sets of cardinality $s$, $s^{s-2}$ spanning trees of $K_s$, and $s \choose 2$ choices for an additional edge. By~(\ref{eq:prob_Gstar}), the probability that selected edges are present in $G^*$ is at most $(8.15n)^s$.) 

Note that $X$ counts those sets $S\in {\cal Z}$ that already satisfy the desired property in the subgraph $\hat G_{\tau_3}$. We may assume that $\hat G_{\tau_4}$ has property {\tt E}. Hence, it is sufficient to further bound the number of sets $S\in {\cal Z}$ that contain exactly one deficit vertex $v$ and induce at least one golden edge incident with $v$. 
Let $Y_S$ be the indicator variable that $S$ is such a set. Let $Y = \sum_{S: 3 \le |S|\le\ln n/10} Y_S$ and we immediately have $Z\le X+Y$.
Hence, our next task is to upper bound $\ex [Y]$. There are $|S|$ ways to choose vertex $v$ in $S$ to be the deficit vertex. Then either $v$ is incident with a loop, or a multiple edge in $G_{\tau_2}$. We will only bound $\ex[Y_S1_{A}]$ where $A$ denotes the event that the deficit vertex in $S$ is incident with a loop in  $G_{\tau_2}$; the other case can be dealt with analogously. Let $s=|S|$. There are $s^{s-2}\binom{s}{2}$ ways to specify a set of $s$ edges that must be induced by $S$. Given a specification of such $s$ edges, there are at most $s$ ways to specify one of them to be golden. The probability for that specific edge to be golden is at most $2/n<8.15/n$ (as $v$ sends out 2 golden edges in total). There could be another edge among the $s$ edges that is golden, and the conditional probability for that is at most $2/n<8.15/n$. Moreover, the probability that $v$ is incident with a loop is $O(1/n)$. It follows now that 
\[
\ex[Y1_{A}] \le \sum_{s=3}^{\ln n/10}\binom{n}{s}s^2\cdot s^{s-2}\binom{s}{2}\left(\frac{8.15}{n}\right)^s \cdot O(1/n) = O\left( \frac {\ln^2 n}{n}\right) \cdot \sum_{s=3}^{\ln n / 10} (8.15e)^s=o(1).
\]
As we already mentioned, similar calculations show that $\ex [Y1_{B}]=o(1)$, where $B$ is the event that the deficit vertex in $S$ is incident with a parallel edge in $G_{\tau_2}$. Combining all of the above, we have
$\ex [Z] \le \ex [X] +\ex [Y1_A]+\ex[Y1_{B}]=o(n/\ln n)$.
The lemma follows by Markov's inequality.
\end{proof}

\medskip

Let us fix an arbitrarily small $\eps>0$. After combining Lemma~\ref{lem:gstar_cyclic} and Corollary~\ref{cor:cyclic}, it remains to show that a.a.s.\ there is no vertex partition $S\cup T\cup U\cup R$ of $\hat G_{\tau_4}$ satisfying properties (a)--(d) in Corollary~\ref{cor:cyclic} with some $\gamma \ge \eps n$. However, the distribution of $\hat G_{\tau_4}$ is complicated. As a result, we will work on $G_{\tau_4}$ instead and use Property {\tt E}, which implies that $\hat G_{\tau_4}$ misses at most $\ln \ln n$ edges of $G_{\tau_4}$.

It will be convenient to colour edges of $G_{\tau_3}$ in one of the four colours: blue, green, red, and yellow. Recall that $G_{\tau_3}$ is constructed during the first three phases, and $\G_{\tau_4}$ is obtained by adding up to $\ln \ln n$ golden semi-edges to $G_{\tau_3}$. During the first phase, $G_{2n}$ and the corresponding directed graph $D_{2n}$ are created; $V_0$ and $V_1$ are the sets of vertices in $D_{2n}$ of in-degree 0 and, respectively, of in-degree 1. Let us colour edges of $G_{2n}$ green if their counterparts in $D_{2n}$ are directed into one of the vertices in $V_0 \cup V_1$ which we also colour green. The remaining edges are coloured blue. During the second phase graph $G_{\tau_2}$ is created; let us colour edges added during this phase red. Finally, edges added during the third phase are coloured yellow.


Let us consider any partition $[n]=S\cup T\cup U\cup R$. For any $i\in \{S,T,U, R\}$, let $\alpha_i$ be the fraction of vertices that belong to set $i$ (that is, $\alpha_i=|i|/n$) and let $\gamma_i$ be the fraction of vertices of $i$ that are green (that is, $\gamma_i = |G_i|/\alpha_i n$ where $G_i$ is the set of green vertices in set $i$). Moreover, let $\beta_i$ be the fraction of vertices in $G_i$ that received no incoming edge in $D_{2n}$ (that is, $\beta_i = |G_i \cap V_0|/|G_i|$). In order to simplify the notation, we define the following vectors: ${\bm \alpha}=(\alpha_i)_{i\in \{S,T,U,R\}}$, ${\bm \beta}=(\beta_i)_{i\in \{S,T,U,R\}}$, ${\bm \gamma}=(\gamma_i)_{i\in \{S,T,U,R\}}$. It follows immediately from the above definitions that the following properties hold:
\begin{equation}
\sum_{i\in \{S,T,U,R\}}\alpha_i=1, \quad, 0\le \gamma_i\le 1, \quad 0\le \beta_i\le 1 \quad\mbox{for all $i\in \{S,T,U,R\}$}. \lab{alpha}
\end{equation}
Next, for $i,j\in \{S,T,U,R\}$, let $b_{ij} \cdot (2\alpha_i n)$, $g_{ij} \cdot (2\alpha_i n)$, and $r_{ij} \cdot (2\beta_i+(1-\beta_i))\gamma_i\alpha_i n$ denote the numbers of blue, green and, respectively, red edges from set $i$ to set $j$. Vectors ${\bm b}=(b_{ij})_{i,j\in \{S,T,U,R\}}$, ${\bm g}=(g_{ij})_{i,j\in \{S,T,U,R\}}$, and ${\bm r}=(r_{ij})_{i,j\in \{S,T,U,R\}}$ describe the distribution of edges of a given colour between parts. Let $y_1\cdot 0.07n$ denote the number of yellow edges that are either incident to a vertex in $U$, or are induced by $R$. Let $y_2\cdot 0.07n$ denote the number of yellow edges that are induced by $T$. Note that there are $(1-y_1-y_2) \cdot 0.07n$ edges between $S$ and $R \cup T$. Hence, the vector $(y_1, y_2, 1-y_1-y_2)$ describes the distribution of yellow edges.

Let us fix ${\bm u}=({\bm \alpha}, {\bm \beta}, {\bm \gamma}, {\bm b}, {\bm g}, {\bm r}, y_1,y_2)$. Our goal is to upper bound the probability $P(\bm u)$ that there exists a partition $[n]=S\cup T\cup U\cup R$ with $|i|=\alpha_i n$ for $i\in \{S,T,U,R\}$, and subsets $i'\subseteq i$ for $i\in \{S,T,U,R\}$ with $|i'|=\gamma_i\alpha_i n$ such that the following properties hold:
\begin{itemize}
\item there are exactly $b_{ij} \cdot (2\alpha_i n)$ blue directed edges from set $i$ to set $j$;
\item there are exactly  $g_{ij} \cdot (2\alpha_i n)$ green directed edges from set $i$ to set $j$;
\item there are  exactly  $r_{ij} \cdot (2\beta_i+(1-\beta_i))\gamma_i\alpha_i n$ red directed edges from set $i$ to set $j$; 
\item there are exactly $y_1\cdot 0.07n$  yellow edges that are either incident to a vertex in $U$, or are induced by $R$;
\item there are exactly $y_2\cdot 0.07n$  yellow edges that are induced by $T$;
\item there are no yellow edges inside $S$, or between $R$ and $T$;
\item all vertices in $i'$ received at most 1 incoming green edge;
\item all vertices in $i\setminus i'$ received at least 2 incoming blue edges.
\end{itemize} 
We will show that $P({\bm u}) \le poly(n) \exp(f(\bm u)n)$ for some explicit function $f(\bm u)$. Unfortunately, this function is quite involved so we will define it in the next section. 

\subsection{Property $\P$}\label{sec:function}

Let $\eps_0=2^{-32}$. Let us start with an observation that, due to Lemma~\ref{lem:aas}, we may assume that the parameter ${\bm u}$ is of a specific form, that is, it satisfies the following constraints:
\begin{align}
&-\eps_0<\sum_{i\in \{S,T,U,R\}}\gamma_i\alpha_i - 3e^{-2}<\eps_0\label{eq1}\\
&-\eps_0<\sum_{i\in \{S,T,U,R\}}\beta_i\gamma_i\alpha_i - e^{-2}<\eps_0\label{eq2}\\
&-\eps_0<\sum_{i\in \{S,T,U,R\}} 2\alpha_i \sum_{j\in \{S,T,U,R\}} g_{ij} - 2e^{-2}<\eps_0,\label{eq3}
\end{align}  
Indeed, equations~(\ref{eq1}) and~(\ref{eq2}) follow from the fact that a.a.s.\ $|V_0|+|V_1| = (3e^{-2}+o(1)) n$ and, respectively, $|V_0| = (e^{-2}+o(1)) n$ (Lemma~\ref{lem:aas}(a)); equation~(\ref{eq3}) follows from the fact that the number of green edges is equal to $|V_1|$ and so a.a.s.\ it is asymptotic to $2e^{-2}n$.
We also have the following set of obvious constraints:
\begin{align}
&\sum_{j\in \{S,T,U,R\}} (b_{ij}+g_{ij}) =1 , \quad \mbox{for all $i\in \{S,T,U,R\}$} \lab{sum1} \\
&\sum_{j\in \{S,T,U,R\}} r_{ij} =1 , \quad \mbox{for all $i\in \{S,T,U,R\}$} \\
&y_1+y_2\le 1\\
&{\bm \alpha}, {\bm \beta}, {\bm \gamma}, {\bm b}, {\bm g}, {\bm r}, y_1, y_2 \in [0,1]. 
\end{align}
(For the ease of notation, we write that a vector is in $[0,1]$ when every component of the vector is in $[0,1]$.)
As we only consider partitions satisfying properties (a)--(d) stated in Corollary~\ref{cor:cyclic}, we additionally require that
\begin{align}
&\alpha_S\ge \alpha_U, \\
&2\alpha_Tb_{TT}+2\alpha_Tg_{TT}+\gamma_T\alpha_T(2\beta_T+(1-\beta_T))r_{TT} + 0.07y_2 \le \alpha_T+\min\{\eps_0, \alpha_T\}, \\
&c_{SS}=c_{RT}=c_{TR}=0,\quad \mbox{for all $c\in \{b,g,r\}$}.
\end{align}
The first constraint comes immediately from property~(b),  and the last constraint follows from properties~(a) and~(d). The second constraint comes from the fact that $e(T)\le |T|$ in $\hat G_{\tau_4}$ required by property~(a), which together with Property {\tt E} imply that $e(G)\le |T|+\eps_0 n$ and $e(G)\le 2|T|$ in $G_{\tau_4}$ (note there can be at most $|T|$ loops or double edges induced by $T$). Finally, let us note that Properties~(c) and {\tt E}, and Lemma~\ref{lem:number_of_edges} imply that a.a.s.\ the number of edges incident with $U$ or induced by $R$ is at least
\begin{align*}
|E(G_{\tau_4})| - & e(S\cup T) - e(S \cup T,R) \\
& \ge (2+4e^{-2}+0.07+o(1))n -|T|-2|S|+2|U| + 2\gamma-33n/\ln n -\ln \ln n\\
& \ge (2+4e^{-2}+0.07)n-|T|-2|S|+2|U|\\
& =(4e^{-2}+0.07+4\alpha_U+\alpha_T+2\alpha_R)n.
\end{align*}
This yields the following constraint:
\begin{align}
2\alpha_U +\gamma_U\alpha_U(1+\beta_U) + 2\alpha_S(b_{SU}+g_{SU}) +\gamma_S\alpha_S r_{SU}(1+\beta_S) + 2\alpha_T(b_{TU}+g_{TU}) & \nonumber\\
            + \gamma_T\alpha_Tr_{TU}(1+\beta_T)+  2\alpha_R(b_{RU}+b_{RR}+g_{RU}+g_{RR}) & \nonumber\\
+ \gamma_R\alpha_R(r_{RU}+r_{RR})(1+\beta_R) +0.07 y_1&\nonumber\\
\ge 4e^{-2} +0.07 + 4\alpha_U + \alpha_T + 2\alpha_R. & \lab{constraint1}
\end{align} 
For $i\in\{S,T,U,R\}$, the number of blue edges coming into set $i$ must be at least $2\alpha_i(1-\gamma_i) n$, as every vertex in $i\setminus (V_0\cup V_1)$ must receive at least 2 blue edges. This yield the following set of constraints:
\begin{equation}
    \sum_{j\in\{S,T,U,R\}} 2\alpha_j  b_{ji} \ge 2\alpha_i (1-\gamma_i),\quad \mbox{for all $i\in\{S,T,U,R\}$.}\label{blue}
\end{equation}
Finally,
the number of green edges coming into each set satisfies the following constraints:
\begin{equation}
\sum_{j\in\{S,T,U,R\}} 2\alpha_j  g_{ji} = \alpha_i \gamma_i(1-\beta_i),\quad \mbox{for all $i\in\{S,T,U,R\}$.}\lab{constraint}
\end{equation}

\medskip

Now, we are ready to show that $P({\bm u}) \le poly(n) \exp(f(\bm u)n)$ for some explicit function $f(\bm u)$. 
Given a vector of non-negative real numbers ${\bm a}$ with $\sum_i a_i = 1$, let $H({\bm a})=-\sum_{i} a_i\ln a_i$. If ${\bm a}=(a_1,a_2)$, then we simply write $H(a_1)$ for $H({\bm a})$.
By convention, we set $0\log(0)=0$, for any $a >0$ we set $a\log(0)=-\infty$, and we treat $-\infty< x$ for every real number $x$.

Given ${\bm u}$, there are $\binom{n}{\alpha_S n, \alpha_T n, \alpha_U n, \alpha_R n} =poly(n) \exp(H(\alpha_S,\alpha_T,\alpha_U,\alpha_R)n)$ choices for sets $S$, $T$, $U$, and $R$. Given $S$, $T$, $U$, $R$, there are 
\[
\prod_{i\in\{S,T,U,R\}}\binom{\alpha_i n}{\gamma_i \alpha_i n} =poly(n)\prod_{i\in\{S,T,U,R\}} \exp(H(\gamma_i) \alpha_i n) =poly(n) \exp \left( n \sum_{i\in\{S,T,U,R\}} H(\gamma_i) \alpha_i \right)
\] 
ways to choose $G_S$, $G_T$, $G_U$ and $G_R$. 
The probability that the number of blue and green edges going out of $S$ into each part of $S$, $T$, $U$, $R$ is precisely as prescribed by $\bm u$ is equal to $poly(n) \exp(f_S n)$, where
\begin{align*}
f_S=&2\alpha_S \Big( H(b_{SU},b_{ST},b_{SR},g_{SU},g_{ST},g_{SR}) + b_{SU}\ln((1-\gamma_U)\alpha_U) +b_{ST}\ln ((1-\gamma_T)\alpha_T)\\
&+b_{SR}\ln ((1-\gamma_R)\alpha_R) +g_{SU}\ln(\gamma_U\alpha_U)+g_{ST}\ln(\gamma_T\alpha_T) +g_{SR}\ln(\gamma_R\alpha_R)   \Big).
\end{align*}
Indeed, there are $2 \alpha_S n$ edges going out of $S$ that are blue or green. We need to partition them into 6 classes depending on their colour and to which part they go to. This gives us the term $2\alpha_S H(b_{SU},b_{ST},b_{SR},g_{SU},g_{ST},g_{SR})$. For each $i\in\{T,U,R\}$, there are $2\alpha_S b_{Si} n$ blue edges that need to go to blue vertices of $i$ (hence terms $2\alpha_S b_{Si} \ln((1-\gamma_i)\alpha_i)$) and there are $2\alpha_S g_{Si} n$ green edges that need to go to green vertices of $i$ (hence terms $2\alpha_S g_{Si} \ln(\gamma_i \alpha_i)$).
Similarly, the probabilities that the number of blue and green edges going out of $T$, $U$, $R$ into other parts is precisely as encoded by $\bm u$ are $poly(n) \exp(f_T n)$, $poly(n) \exp(f_Un)$ and, respectively, $poly(n) \exp(f_Rn)$, where
\begin{align*}
f_T=&2\alpha_T \Big( H(b_{TS},b_{TT},b_{TU},g_{TS},g_{TT},g_{TU}) + b_{TS}\ln((1-\gamma_S)\alpha_S) +b_{TT}\ln ((1-\gamma_T)\alpha_T)\\
&+b_{TU}\ln ((1-\gamma_U)\alpha_U) +g_{TS}\ln(\gamma_S\alpha_S)+g_{TT}\ln(\gamma_T\alpha_T) +g_{TU}\ln(\gamma_U\alpha_U)   \Big),
\end{align*}
\begin{align*}
f_U=&2\alpha_U \Big( H(b_{US},b_{UT},b_{UU},b_{UR},g_{US},g_{UT}, g_{UU}, g_{UR}) + b_{US}\ln((1-\gamma_S)\alpha_S) \\
&+b_{UT}\ln ((1-\gamma_T)\alpha_T) +b_{UU}\ln ((1-\gamma_U)\alpha_U)+b_{UR}\ln ((1-\gamma_R)\alpha_R) +g_{US}\ln(\gamma_S\alpha_S) \\
&+g_{UT}\ln(\gamma_T\alpha_T) +g_{UU}\ln(\gamma_U\alpha_U)+g_{UR}\ln(\gamma_R\alpha_R)   \Big),
\end{align*}
and
\begin{align*}
f_R=&2\alpha_R \Big( H(b_{RS},b_{RU},b_{RR},g_{RS},g_{RU},g_{RR}) + b_{RS}\ln((1-\gamma_S)\alpha_S) +b_{RU}\ln ((1-\gamma_U)\alpha_U)\\
&+b_{RR}\ln ((1-\gamma_R)\alpha_R) +g_{RS}\ln(\gamma_S\alpha_S)+g_{RU}\ln(\gamma_U\alpha_U) +g_{RR}\ln(\gamma_R\alpha_R)   \Big).
\end{align*}
Given that, the probabilities that the number of red edges going out of $S$, $T$, $U$, $R$ into each part of $S$, $T$, $U$, $R$ exactly as dictated by $\bm u$ are $poly(n) \exp(g_S n)$, $poly(n) \exp(g_T n)$, $poly(n) \exp(g_Un)$ and, respectively, $poly(n) \exp(g_Rn)$, where
\begin{align*}
g_S=&\alpha_S\gamma_S(2\beta_S+(1-\beta_S))\Big(H(r_{SU},r_{ST},r_{SR})+r_{SU}\ln \alpha_U+r_{ST}\ln \alpha_T+r_{SR}\ln \alpha_R\Big), \\
g_T=&\alpha_T\gamma_T(2\beta_T+(1-\beta_T))\Big(H(r_{TS},r_{TT},r_{TU})+r_{TS}\ln \alpha_S+r_{TT}\ln \alpha_T+r_{TU}\ln \alpha_U\Big), \\
g_U=&\alpha_U\gamma_U(2\beta_U+(1-\beta_U))\Big(H(r_{US},r_{UT},r_{UU}, r_{UR})+r_{US}\ln \alpha_S+r_{UT}\ln \alpha_T \\
&\qquad\qquad\qquad\qquad\qquad\qquad +r_{UU}\ln \alpha_U+r_{UR}\ln \alpha_R\Big), \\
g_R=&\alpha_R\gamma_R(2\beta_R+(1-\beta_R))\Big(H(r_{RS},r_{RU},r_{RR})+r_{RS}\ln \alpha_S+r_{RU}\ln \alpha_U+r_{RR}\ln \alpha_R\Big).
\end{align*}

\medskip

In order to continue our computations, we need the following auxiliary lemma on the ``balls into bins'' model.

\begin{lemma}\lab{lem:balls_in_bins}
Fix $\alpha > 0$ and suppose that $\alpha n$ balls are thrown independently and uniformly at random into $n$ bins. 
\begin{enumerate}
\item[(a)] If $\alpha>2$, then the probability that every bin receives at least two balls is asymptotic to $poly(n)\exp(t(\alpha)n)$
with $t(\alpha)=\lambda-\alpha+\alpha\ln(\alpha/\lambda)+\ln(1-e^{-\lambda}-\lambda e^{-\lambda})$,
where $\lambda = \lambda(\alpha) > 0$ is the unique solution of the following equation:
\[
\frac{\lambda(1-e^{-\lambda})}{1-e^{-\lambda}-\lambda e^{-\lambda}}=\alpha.
\]
\item[(b)] If $\alpha\le 1$, then the probability that every bin receives at most one ball is asymptotic to $poly(n)\exp(\kappa(\alpha)n)$,
where
$
\kappa(\alpha)=-\alpha-(1-\alpha)\ln (1-\alpha).
$
\item[(c)] If $\alpha=2$, the probability that every bin receives exactly two balls is asymptotic to $poly(n)\exp((\ln 2-2)n)$.
\end{enumerate}
\end{lemma}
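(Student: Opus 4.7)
The plan is to deduce each asymptotic from an exact combinatorial formula combined with Stirling's approximation, with part (a) additionally requiring a saddle-point coefficient extraction. Throughout I treat the balls as distinguishable, the bins as distinguishable, and divide by the total count $n^{\alpha n}$.

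Parts (b) and (c) are essentially immediate. For (b), the number of placements of $\alpha n$ balls with at most one ball per bin is the falling factorial $n!/((1-\alpha)n)!$; dividing by $n^{\alpha n}$ and applying Stirling produces $-\alpha n - (1-\alpha)n \ln(1-\alpha) + O(\ln n) = \kappa(\alpha) n + O(\ln n)$. For (c), the number of placements of $2n$ balls with exactly two per bin is $(2n)!/2^n$ (choose the unordered pair of balls for each bin); then $(2n)!/(2^n n^{2n}) = poly(n) \cdot 2^n e^{-2n} = poly(n) \exp((\ln 2 - 2) n)$ after Stirling on $(2n)!$.

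Part (a) is the substantive case. Exponential-generating-function bookkeeping gives
\[
\#\{\text{placements with every bin receiving}\ge 2\} \;=\; (\alpha n)! \cdot [z^{\alpha n}]\,(e^z - 1 - z)^n,
\]
and I would apply the saddle-point method to extract the coefficient. The saddle $\lambda > 0$ is determined by $\lambda \cdot (d/d\lambda)\ln(e^\lambda - 1 - \lambda) = \alpha$, i.e.\ $\lambda(e^\lambda - 1)/(e^\lambda - 1 - \lambda) = \alpha$; multiplying numerator and denominator by $e^{-\lambda}$ recovers the equation in the statement. Since $e^z - 1 - z$ is entire with nonnegative Taylor coefficients, the standard saddle-point asymptotic yields $[z^{\alpha n}](e^z - 1 - z)^n = poly(n) \cdot \lambda^{-\alpha n}(e^\lambda - 1 - \lambda)^n$. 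Combining this with the Stirling estimate $(\alpha n)!/n^{\alpha n} = poly(n) \cdot \alpha^{\alpha n} e^{-\alpha n}$ and rewriting $e^\lambda - 1 - \lambda = e^\lambda(1 - e^{-\lambda} - \lambda e^{-\lambda})$ produces exactly
\[
t(\alpha) \;=\; -\alpha + \alpha \ln(\alpha/\lambda) + \lambda + \ln\bigl(1 - e^{-\lambda} - \lambda e^{-\lambda}\bigr),
\]
as required.

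The main obstacle is the existence and uniqueness of the saddle $\lambda$, together with the rigor of the saddle-point expansion. For the former, one checks that $\varphi(\lambda) := \lambda(1-e^{-\lambda})/(1-e^{-\lambda}-\lambda e^{-\lambda})$ is strictly increasing on $(0,\infty)$ with $\varphi(0^+) = 2$ (by Taylor expansion) and $\varphi(\lambda) \sim \lambda$ as $\lambda \to \infty$, so the hypothesis $\alpha > 2$ ensures a unique positive root. A cleaner alternative that avoids contour integrals is Poissonization: let $Y_1,\dots,Y_n$ be iid Poisson$(\lambda)$, and observe that conditioned on $\sum_i Y_i = \alpha n$ the vector $(Y_1,\dots,Y_n)$ has exactly the multinomial law of the balls-into-bins experiment. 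A local central limit theorem turns the conditioning into a $\Theta(n^{-1/2})$ factor, and the event that every $Y_i \ge 2$ contributes $(1 - e^{-\lambda} - \lambda e^{-\lambda})^n$; dividing by $\pr(\sum_i Y_i = \alpha n)$ and tidying the bookkeeping yields the same $t(\alpha)$. I would carry out whichever of the two routes proves less tedious in the write-up.
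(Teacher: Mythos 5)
Your proposal is correct, and your second route for part (a) — Poissonization plus a local central limit theorem, with $\lambda$ chosen as the tilt making the truncated Poisson have mean $\alpha$ — is exactly what the paper does (it invokes Gnedenko's local limit theorem on the sum of iid copies of a Poisson truncated at $2$). Your saddle-point alternative via $[z^{\alpha n}](e^z-1-z)^n$ is the same calculation in generating-function form (the saddle \emph{is} the Poisson tilt), and parts (b) and (c) match the paper's elementary counting arguments verbatim.
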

Before we prove the lemma, let us note that 
$$
f(\lambda) := \frac{\lambda(1-e^{-\lambda})}{1-e^{-\lambda}-\lambda e^{-\lambda}} = \frac {\lambda (1-(1-\lambda+O(\lambda^2)))}{1 - (1-\lambda+\lambda^2/2+O(\lambda^3))(1+\lambda)} = \frac {\lambda^2 + O(\lambda^3)}{\lambda^2/2+O(\lambda^3)} = 2 +O(\lambda),
$$
so $\lim_{\lambda \to 0^+} f(\lambda) = 2$. It is also straightforward to see that $\lim_{\lambda \to \infty} f(\lambda) = \infty$ and $f(\lambda)$ is an increasing function of $\lambda$. Hence, indeed, $\lambda=\lambda(\alpha)$ is well defined.
For convenience, we define $\lambda(2)=0$ and set
\[
 t(2)=\lim_{\alpha\to 2+} t(\alpha)=\ln 2-2.
\]
This definition of $t:[2,\infty)\to {\mathbb R}$ unifies parts (a) and (c) in the lemma above.

\begin{proof}
Suppose that $\alpha\ge 2$. Let $K$ be the truncated Poisson variable with parameter $\lambda = \lambda(\alpha)$ and truncated at 2, that is,
\[
\pr(K=j)=\frac{e^{-\lambda}\lambda^j}{j!(1-e^{-\lambda}-\lambda e^{-\lambda})}, \qquad\mbox{for every integer $j\ge 2$.}
\]
It follows that
\begin{eqnarray*}
\ex K &=& \sum_{j\ge 2} j \cdot \pr(K=j) = \sum_{j\ge 2} \frac{e^{-\lambda}\lambda^j}{(j-1)!(1-e^{-\lambda}-\lambda e^{-\lambda})} \\
&=& \frac{\lambda}{1-e^{-\lambda}-\lambda e^{-\lambda}} \sum_{j\ge 1} \frac{e^{-\lambda}\lambda^j}{j!} = \frac{\lambda(1-e^{-\lambda})}{1-e^{-\lambda}-\lambda e^{-\lambda}} = \alpha,
\end{eqnarray*}
by the definition of $\lambda$.

Let $k_1,\ldots, k_n$ be $n$ independent copies of $K$. Then, by Gnedenko's local limit theorem~\cite{lll},
\begin{eqnarray*}
\Theta(n^{-1/2}) &=& \pr \left( \sum_{i=1}^n k_i=\alpha n \right) = \sum_{\substack{j_1\ge 2, \ldots, j_n\ge 2\\ \sum_{i=1}^n j_i =\alpha n}} \prod_{i=1}^n \frac{e^{-\lambda} \lambda^{j_i}}{j_i!(1-e^{-\lambda}-\lambda e^{-\lambda})} \\
&=& \frac{e^{-\lambda n}\lambda^{\alpha n}}{(1-e^{-\lambda}-\lambda e^{-\lambda})^n}{\sum}^*,
\end{eqnarray*}
where
\[
{\sum}^*=  \sum_{\substack{j_1\ge 2, \ldots, j_n\ge 2\\ \sum_{i=1}^n j_i =\alpha n}} \prod_{i=1}^n \frac{1}{j_i!}. 
\]
Hence,
\[
{\sum}^* = poly(n)  \frac{(1-e^{-\lambda}-\lambda e^{-\lambda})^n}{e^{-\lambda n}\lambda^{\alpha n}}.
\]

Consider now throwing $\alpha n$ balls independently and uniformly at random into $n$ bins. By Stirling's formula ($x! = poly(x) (x/e)^x$), the probability that every bin receives at least 2 balls is equal to
\begin{eqnarray*}
\sum_{\substack{j_1\ge 2, \ldots, j_n\ge 2\\ \sum_{i=1}^n j_i=\alpha n}} \binom{\alpha n}{j_1,\ldots, j_n} \ n^{-\alpha n} 
&=& \frac{(\alpha n)!}{n^{\alpha n}} {\sum}^* = poly(n) e^{-\alpha n} \alpha^{\alpha n} {\sum}^* \\
&=& poly(n) e^{-\alpha n} \alpha^{\alpha n} \frac{(1-e^{-\lambda}-\lambda e^{-\lambda})^n}{e^{-\lambda n}\lambda^{\alpha n}}=poly(n)\exp(t(\alpha) n). 
\end{eqnarray*}
This completes the proof of part~(a).

To show part~(b), suppose that $\alpha\le 1$. The probability that every bin receives at most one ball is equal to
\[
\frac{(n)_{\alpha n}}{n^{\alpha n}} = \frac {n!}{(n-\alpha n)! n^{\alpha n}} =poly(n)\exp(\kappa(\alpha)n),
\]
where $(x)_j=\prod_{i=0}^{j-1}(x-j)$ denotes the $j$-th falling factorial. 

To show part~(c), note that the probability that every bin receives exactly two balls is equal to
\[
\frac{(2n)!/2^n}{n^{2n}} = poly(n)\exp((\ln2-2)n).
\]
This finishes the proof of the lemma.
\end{proof}

\medskip

We are now back to our problem. With Lemma~\ref{lem:balls_in_bins} at hand, we will be able to prove the following claims.

\begin{claim}\lab{claim:blue}
The probability that all vertices in $[n]\setminus(G_S\cup G_T\cup G_U\cup G_R)$ receive at least two blue edges is equal to $poly(n) \exp \left( n \sum_{i\in \{S,T,U,R\}} w_i \right) $, where
\[
w_i=(1-\gamma_i)\alpha_i\Big(\lambda_i-d_i+d_i\ln(d_i/\lambda_i)+\ln(1-e^{-\lambda_i}-\lambda_ie^{-\lambda_i})\Big),
\]
\[
d_i=\frac{\sum_{j\in\{S,T,U,R\}} 2\alpha_jb_{ji}}{(1-\gamma_i)\alpha_i},
\]
and $\lambda_i=\lambda_i(d_i) > 0$ is the unique solution of the following equation:
\[
\frac{\lambda_i(1-e^{-\lambda_i})}{1-e^{-\lambda_i}-\lambda_ie^{-\lambda_i}}=d_i.
\]
\end{claim}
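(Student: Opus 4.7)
The plan is to reduce the event to four independent balls-into-bins problems, one per target part, and apply Lemma~\ref{lem:balls_in_bins} to each.

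First I would observe that $[n]\setminus(G_S\cup G_T\cup G_U\cup G_R)$ is exactly the set of non-green vertices, which in part $i\in\{S,T,U,R\}$ consists of $(1-\gamma_i)\alpha_i n$ vertices, and by the definition of $G_i$ each such vertex is required to receive at least two incoming blue edges. The total number of blue edges directed into these vertices equals $\sum_{j\in\{S,T,U,R\}} 2\alpha_j b_{ji}\, n = d_i\cdot (1-\gamma_i)\alpha_i n$, which is the definition of $d_i$ rearranged.

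The main step I would then carry out is to argue that, conditional on the partition, on the colouring $G_S,G_T,G_U,G_R$, and on all the coarse counts encoded in $\bm u$, the in-endpoint of each blue edge targeted at part $i$ is uniformly distributed among the $(1-\gamma_i)\alpha_i n$ non-green vertices of $i$; moreover, the blue in-edges aimed at different target parts are conditionally independent since they form disjoint edge families. This uniformity is inherited from the fact that the semi-random process picks each $u_t$ uniformly from $[n]$, so after conditioning on the coarse structure, any two non-green vertices of the same target part remain exchangeable. For each $i$ separately we are then in the exact setting of Lemma~\ref{lem:balls_in_bins}, with $(1-\gamma_i)\alpha_i n$ bins and $d_i\cdot(1-\gamma_i)\alpha_i n$ balls, and the event is that every bin receives at least two balls. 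By constraint~\eqref{blue}, $d_i\ge 2$, so part~(a) of the lemma applies (with the boundary case $d_i=2$ absorbed via the extension $t(2)=\ln 2-2$) with $\alpha=d_i$ and with the role of $n$ played by $(1-\gamma_i)\alpha_i n$, yielding probability $poly(n)\exp(t(d_i)(1-\gamma_i)\alpha_i n) = poly(n)\exp(w_i n)$, where $\lambda_i$ is defined exactly by the equation in the statement.

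Multiplying the four probabilities using conditional independence then gives the claimed $poly(n)\exp(n\sum_{i} w_i)$. I expect the main obstacle to be the careful bookkeeping for conditional uniformity and independence: one must verify that conditioning on $\bm u$ does not introduce any hidden bias as to where within part $i$ the blue in-edges land, and that the four balls-into-bins experiments really are jointly conditionally independent. Once that symmetry is in place, the rest is a direct application of Lemma~\ref{lem:balls_in_bins}, and all of the nontrivial algebra is already packaged inside $t(\cdot)$ and $\lambda_i$, so no further estimation is needed.
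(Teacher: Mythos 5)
Your proof takes the same approach as the paper: identify the $(1-\gamma_i)\alpha_i n$ non-green vertices of part $i$ as bins and the $\sum_j 2\alpha_j b_{ji}\, n = d_i(1-\gamma_i)\alpha_i n$ incoming blue edges as balls, then apply Lemma~\ref{lem:balls_in_bins}(a) to each part and multiply. Your write-up is in fact slightly more careful than the paper's --- you explicitly flag and justify the conditional uniformity/exchangeability within each part and the joint independence across parts, which the paper takes as immediate --- and you also correctly identify $(1-\gamma_i)\alpha_i n$ as the number of \emph{bins} (the paper's phrasing ``number of balls'' there is a minor slip).
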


Before we move to the proof, let us remark that, by~\eqn{blue}, for every $i\in\{S,T,U,R\}$ we have $d_i\ge 2$ and so $\lambda_i$ is well defined.

\begin{proof}
Note that for each $i\in\{S,T,U,R\}$, the number of blue edges coming into $i\setminus G_i$ is equal to
$\sum_{j\in \{S,T,U,R\}} 2\alpha_j n b_{ji}$. Moreover, $|i\setminus (V_0\cup V_1)|=(1-\gamma_i)\alpha_i n$. The claim follows immediately from Lemma~\ref{lem:balls_in_bins}(a) applied with $\alpha=\sum_{j\in \{S,T,U,R\}} 2\alpha_j  b_{ji}/(1-\gamma_i)\alpha_i = d_i$ and the number of balls equal to $(1-\gamma_i)\alpha_i n$.
\end{proof}

\medskip

\begin{claim}\lab{claim:green}
The probability that all vertices in $G_S\cup G_T\cup G_U\cup G_R$ receive at most one green edge is equal to $poly(n) \exp \left( n \sum_{i\in \{S,T,U,R\}} \tilde w_i \right) $, where
\[
\tilde w_i= \alpha_i\gamma_i \Big(-1+\beta_i-\beta_i\ln \beta_i\Big).
\]
\end{claim}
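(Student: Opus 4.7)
My plan is to mirror the proof of Claim~\ref{claim:blue}, using part~(b) of Lemma~\ref{lem:balls_in_bins} in place of part~(a): the defining property of a \emph{green} vertex is that it receives at most one incoming edge during phase~1, as opposed to a non-green vertex, which must receive at least two.

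Fix $i\in\{S,T,U,R\}$ and treat the $\alpha_i\gamma_i n$ green vertices of part $i$ as bins. The green edges directed into $i$ play the role of balls; by constraint~\eqref{constraint} their number is
\[
\sum_{j\in\{S,T,U,R\}} 2\alpha_j g_{ji}\, n \;=\; \alpha_i\gamma_i(1-\beta_i)\,n.
\]
Conditional on $\bm u$ (in particular on the inter-part green-count matrix $(g_{ji})$), these balls are independently and uniformly distributed over $G_i$. The event that every vertex of $G_i$ receives at most one green edge is thus exactly the ``at most one ball per bin'' event of Lemma~\ref{lem:balls_in_bins}(b), applied with $n'=\alpha_i\gamma_i n$ bins and ratio $\alpha' = 1-\beta_i$, which lies in $[0,1]$ since $0\le\beta_i\le1$. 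The lemma yields a probability of $poly(n)\exp\bigl(\kappa(1-\beta_i)\,\alpha_i\gamma_i n\bigr)$, and a one-line simplification
\[
\kappa(1-\beta_i) \;=\; -(1-\beta_i) - \beta_i\ln\beta_i \;=\; -1+\beta_i-\beta_i\ln\beta_i
\]
turns the exponent into $\tilde w_i\, n$ with $\tilde w_i$ exactly as claimed.

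To conclude, the four balls-into-bins experiments (one per $i$) are conditionally independent once $(g_{ji})$ is fixed, since the bin sets $G_S,G_T,G_U,G_R$ are pairwise disjoint subsets of $[n]$. Multiplying the four probabilities produces the global factor $poly(n)\exp\!\bigl(n\sum_{i}\tilde w_i\bigr)$. I do not anticipate a serious obstacle; the only care required is to carry the conditioning in the correct order, namely first on the partition and the designation of green vertices, then on the cross-part counts $(g_{ji})$, and only then on the individual ball destinations within each $G_i$. This is the same bookkeeping already used in the proof of Claim~\ref{claim:blue}.
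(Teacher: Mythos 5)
Your proposal is correct and follows the paper's proof almost verbatim: both apply Lemma~\ref{lem:balls_in_bins}(b) with $\gamma_i\alpha_i n$ bins and ratio $1-\beta_i$, and both simplify $\kappa(1-\beta_i)=-1+\beta_i-\beta_i\ln\beta_i$. The extra remarks you add on conditioning order and on the disjointness of $G_S,G_T,G_U,G_R$ ensuring independence of the four balls-into-bins experiments are a harmless and accurate elaboration of what the paper leaves implicit.
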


\begin{proof}
Note that for each $i\in\{S,T,U,R\}$, the number of green edges coming into $i\cap(V_0\cup V_1)$ is
$(1-\beta_i)\gamma_i\alpha_i n$. Moreover, $|i\cap(V_0\cup V_1)|=\gamma_i \alpha_i n$. The claim follows immediately from Lemma~\ref{lem:balls_in_bins}(b) applied with $\alpha=(1-\beta_i)\gamma_i\alpha_i/\gamma_i\alpha_i=1-\beta_i$ and the number of bins equal to $\gamma_i\alpha_i n$.
\end{proof}

\medskip

\begin{claim}\lab{claim:yellow}
The probability that there are exactly $0.07y_1n$ yellow edges incident with $U$ or induced by $R$, and exactly $0.07y_2n$ yellow edges induced by $T$ is equal to $poly(n) \exp(h n)$, where
\begin{align*}
h=0.07\ln(0.07) &+
0.07y_1\ln\left(\frac{\alpha_U^2+2\alpha_U(1-\alpha_U)+\alpha_R^2}{0.07 y_1}\right)+
0.07y_2\ln\left(\frac{\alpha_T^2}{0.07y_2}\right)\\
&+ 0.07(1-y_1-y_2)\ln\left(\frac{2\alpha_S(\alpha_T+\alpha_R)}{0.07(1-y_1-y_2)}\right).
\end{align*}
\end{claim}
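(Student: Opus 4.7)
The plan is to carry out a direct multinomial/Stirling computation, in the same spirit as the earlier formulas for $f_S,\ldots,g_R$, but noticeably simpler because yellow edges carry no structural (degree) constraints beyond the location counts.

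First I would record that, during phase 3, the $0.07n$ yellow edges are i.i.d.\ uniform ordered pairs of distinct vertices: at each step $u_t$ is uniform on $[n]$ (from the semi-random process) and $v_t$ is uniform on $[n]\setminus\{u_t\}$, independently across~$t$. Then, for the given partition $[n]=S\cup T\cup U\cup R$ with $|i|=\alpha_i n$, I would compute by direct counting the probability that one such pair falls into each of the three allowed regions,
\begin{align*}
p_1 &= \Pr(\text{incident with }U\text{ or induced by }R) = \alpha_U^2 + 2\alpha_U(1-\alpha_U) + \alpha_R^2 + O(1/n),\\
p_2 &= \Pr(\text{induced by }T) = \alpha_T^2 + O(1/n),\\
p_3 &= \Pr(\text{between }S\text{ and }T\cup R) = 2\alpha_S(\alpha_T+\alpha_R) + O(1/n),
\end{align*}
where the $O(1/n)$ terms come from the requirement $u\ne v$ in the denominator $n(n-1)$. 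Raised to the power $0.07n$ these corrections contribute only a $1+o(1)$ factor, absorbed into $\mathrm{poly}(n)$.

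By the bulleted constraints imposed on $\bm u$ earlier in the section, the only other possible yellow-edge locations (inside $S$, or between $R$ and $T$) must receive zero edges. Hence the event in question is a multinomial event with three nonempty classes of sizes $0.07y_1 n$, $0.07y_2 n$, $0.07(1-y_1-y_2)n$, and its probability equals
\[
\binom{0.07n}{0.07y_1n,\,0.07y_2n,\,0.07(1-y_1-y_2)n}\,p_1^{0.07y_1n}\,p_2^{0.07y_2n}\,p_3^{0.07(1-y_1-y_2)n}.
\]
I would then apply Stirling in the form $k!=\mathrm{poly}(k)(k/e)^k$ to the multinomial coefficient. Collecting exponents gives $0.07n\ln(0.07n)-\sum 0.07y_i n\ln(0.07y_i n)+\sum 0.07y_i n\ln p_i$ up to $O(\log n)$; since $\sum_i y_i=1$, all $\ln n$ contributions cancel and the remainder equals $hn$ with $h$ exactly as in the statement.

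I do not foresee a real obstacle. The only mildly delicate point is keeping track of the $O(1/n)$ corrections from the $u\ne v$ constraint and checking that they vanish into the $\mathrm{poly}(n)$ prefactor. In contrast to Claims~\ref{claim:blue} and~\ref{claim:green}, no balls-in-bins/truncated-Poisson machinery is needed here, because yellow edges are fully independent and impose no minimum or maximum degree constraint at any individual vertex.
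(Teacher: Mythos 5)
Your proposal is correct and essentially the same as the paper's proof: both are direct Stirling computations, the only cosmetic difference being that you model the yellow edges as $0.07n$ i.i.d.\ uniform ordered pairs (matching the process exactly, giving a multinomial), whereas the paper treats them as a uniformly random set of $0.07n$ unordered edge slots and writes the probability as a ratio of binomials of the form $\binom{\Theta(n^2)}{\Theta(n)}$; both expansions produce the same exponential rate $h$. Your observation that the $O(1/n)$ corrections and the forbidden fourth class (inside $S$ or between $R$ and $T$, forced to zero) vanish into the $\mathrm{poly}(n)$ prefactor is exactly the right bookkeeping.
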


\begin{proof}
Recall that there are $0.07n$ yellow edges in total, so the remaining $0.07(1-y_1-y_2)n$ yellow edges are between $S$ and $R \cup T$. The probability in the claim is equal to
\[
\binom{\binom{\alpha_Un}{2}+\alpha_U(1-\alpha_U)n^2+\binom{\alpha_Rn}{2}}{0.07y_1n}\binom{\binom{\alpha_Tn}{2}}{0.07y_2n}\binom{\alpha_Sn(\alpha_T+\alpha_R)n}{0.07(1-y_1-y_2)n} \binom{\binom{n}{2}}{0.07n}^{-1},
\]
which is equal to $poly(n) \exp(hn)$ by Stirling's formula.
\end{proof}

\medskip

Combining everything together, it follows that $P({\bm u})=poly(n)\exp(f(\bm u)n)$, where
\[
f(\bm u)=H(\alpha_S,\alpha_T,\alpha_U,\alpha_R)+\sum_{i\in\{S,T,U,R\}}\big(\alpha_iH(\gamma_i)+f_i+g_i+w_i+\tilde w_i\big) +h.
\]

Finally, we are ready to state property $\P$. 

\begin{definition}[Property~$\P$]
Suppose there exists $\delta>0$ such that $f({\bm u})<-\delta$ for all vectors ${\bm u}$ subject to~\eqn{alpha}--\eqn{constraint} and $\alpha_R\le 0.995$. 
\end{definition}
Let us remark that the reason to separate $\alpha_R$ from 1 in the definition of Property ${\cal P}$ is that the probability of a specified vertex partition $S\cup T\cup U\cup R$ satisfying Corollary~\ref{cor:cyclic}(a)--(d) will not be exponentially small when $S$, $T$, and $U$ are all of sub-linear size, and thus $f$ is not bounded away from 0 in the entire region~\eqn{alpha}--\eqn{constraint}. 

\subsection{Proof of Lemma~\ref{lem:2-matching}}\label{sec:upper_2matching}

\begin{proof}[Proof of Lemma~\ref{lem:2-matching}]
Suppose that property $\P$ holds, that is, there exists $\delta>0$ such that $f({\bm u})<-\delta$ for all vectors ${\bm u}$ subject to~\eqn{alpha}--\eqn{constraint} and $\alpha_R\le 0.995$. Our goal is to show that a.a.s.\ $\hat G_{\tau_4}$ has a 2-matching with $o(n)$ components. Fix $\eps>0$. As mentioned earlier, after combining Lemma~\ref{lem:gstar_cyclic} and Corollary~\ref{cor:cyclic}, it remains to show that a.a.s.\ there is no vertex partition $S\cup T\cup U\cup R$ of $\hat G_{\tau_4}$ satisfying properties (a)--(d) in Corollary~\ref{cor:cyclic} with some $\gamma \ge \eps n$ and $|R|\le 0.995n$. 

The expected number of partitions $S\cup T\cup U\cup R$ satisfying (a)--(d) with $\gamma \ge \eps n$ and $|R|\le 0.995n$ is at most
\begin{equation}
\sum_{{\bm u}} P({\bm u})= \sum_{{\bm u}} poly(n)\exp(f(\bm u)n), \lab{expec}
\end{equation}
where the sum is over all possible values of ${\bm u}$ satisfying constraints~\eqn{alpha}--\eqn{constraint} and $\alpha_R\le 0.995$,  we have
 $f({\bm u})<-\delta/2$ for all ${\bm u}$ in the range of summation of~\eqn{expec} restricted to $\alpha_R\le 0.995$. The number of possible values of ${\bm u}$ in the summation is clearly $poly(n)$. Hence, the expected number of partitions $S\cup T\cup U\cup R$ satisfying (a)--(d) where $|R|\le 0.995 n$ is 
$$
 \sum_{\bm u} poly(n)\exp(f(\bm u) n) = poly(n)\exp( -\delta n / 2) = o(1). 
$$
 
It only remains to consider partitions $S\cup T\cup U\cup R$ satisfying (a)--(d) with $|R|>0.995n$. Let
\begin{align*}
  x_1 &\quad \mbox{denote the number of edges between $S$ and $T$};\\
  x_2 &\quad \mbox{denote the number of edges between $U$ and $T$};\\
  x_3 &\quad \mbox{denote the number of edges between $S$ and $U$};\\
  x_4 &\quad \mbox{denote the number of edges between $S$ and $R$}.
\end{align*}
 Since the minimum degree of $\hat G_{\tau_4}$ is at least 4, $S$ induces an independent set, and $T$ induces a forest, we get that
 \[
 x_1+x_2+2e(T)\ge 4|T|,\quad  e(T)<|T|,\quad \text{ and } \quad x_1+x_3+x_4\ge 4|S|.
 \]
By property~(c) and the fact that $\gamma \ge \eps n$,  we get that $x_1+e(T)+x_4 \le |T|+2|S|-2|U|$. Hence,
\begin{align}
2|T|+2|S|-2|U| &+x_1+x_2+x_3 > |T|+2|S|-2|U| +x_1+x_2+x_3+e(T)\nonumber\\
& \ge 2x_1+x_2+x_3+x_4+2e(T)\ge 4(|S|+|T|).
\end{align}
It follows that $x_1+x_2+x_3\ge 2(|S|+|T|+|U|) = 2(|S\cup T\cup U|)$, that is, $S\cup T\cup U$ induces at least $2|S\cup T\cup U|$ edges. However, by 
Lemma~\ref{lem:aas}(c), this does \emph{not} happen a.a.s.\ for any partition with $|S\cup T\cup U| \le 0.005n$.
\end{proof}

\subsection{Numerical support}\label{sec:numerical}

The goal of this section is to provide a numerical evidence that property $\mathcal{P}$ holds. The optimization problem was carefully investigated using the code written in the Julia language~\cite{Julia}, \texttt{JuMP.jl} package~\cite{Jump} with \texttt{Ipopt} solver~\cite{Ipopt}. The optimization problem we needed to face is challenging for the following reasons.

Firstly of all, it involves a non-convex optimization problem which potentially has many local optima (we numerically confirmed that this is the case in our problem). In order to overcome this challenge, we used a standard multi-start~\cite{Multistart} approach for solving global optimization problems. However, due to a stochastic nature of the heuristic search procedure used in this process, it means that the results we obtained are only heuristic in nature. In other words, the numerical results we obtained strongly suggest that the desired property holds but this is, unfortunately, not a formal proof of this.

Second of all, the objective function contains terms of the form $x\ln(x)$ which have derivatives tending to $\infty$ as $x\to0$. This creates a challenge when solving the problem using numerical methods. More importantly, in the problem  there are some local optima for which some variables are equal to zero. In order to overcome this problem, we relaxed the original problem by replacing $x\ln(x)$ with some other function $f(x) \le x \ln(x)$ (we need this property as we deal with a maximization problem and terms of the form $x\ln(x)$ appear with a negative sign in the objective function).  Function $f(x)$ should be a quadratic function near $0$, its value and the values of its first and second derivatives should match in the point of change of the formula. The exact function we ended up using as a relaxation of $x\ln(x)$ is:
\begin{equation*}
f(x) = \begin{cases}
2^{31}x^2+\ln(2^{-32})x-2^{-33} &\text{if $0 \le x < 2^{-32}$}\\
x\ln(x) &\text{if $x \geq 2^{-32}$}
\end{cases}.
\end{equation*}

The third challenge is that the optimization problem for most of the variables allows the domain to be $[0,1]$ and we have $\ln(x)$ occurring in multiple places of the formulation of the objective function (and also other than $x\ln(x)$ which is handled by the relaxation described above). This poses another challenge when the solver performs a local search in the points near the boundary of the admissible set. In such cases a logarithm of negative value might be considered (note that the solver evaluates the objective function for points contained in some small neighbourhood of a current potential solution before ensuring that the constraints are satisfied; as a result, if points close to $0$ are considered, such neighbourhood could contain negative values), which leads to errors when performing the computation. In order to overcome this problem, we apply the transformation given by the formula
$$
g(x) = \frac {1}{2} \left( \sin \left( \pi \left(x- \frac 12\right) \right)+1 \right)
$$
to every variable that is constrained to the interval $[0,1]$, before passing it for the evaluation of the objective function and constraints. Note that this transformation is a bijection from the interval $[0,1]$ into the interval $[0,1]$ but it guarantees that if some decision variable is tested outside the $[0,1]$ interval it is transformed back to $[0,1]$ interval (such values are rejected later anyway due to the constraints but are tested during the optimization process which causes no error). Also note that the transformation we use is an analytic function, which means that it does not introduce additional problems when calculating the first or the second derivatives of the objective functions or constraints.

In order to explore the solution space thoroughly, we have performed two optimization processes. In the first one, we tested the interior of the solution space, that is, all decision variables that are restricted to $[0,1]$ were in fact constrained even further to be in the $[0.005, 0.995]$ interval. In the second optimization scenario, we did not impose these additional constraints and all the variables were allowed to be taken from their original domain.
The largest local optimum found across both scenarios was $-0.000722123670503$ (we report the value of the original objective function, before the relaxation). It was clearly separated from the boundary; indeed, all decision variables restricted to the interval $[0,1]$ actually lied in the $[0.0032,0.9586]$ interval. This is consistent with a theoretical understanding of the problem; it is expected that there is no problem with the boundary. In both scenarios there were some additional local optima (two in the first scenario and four in the second) but all of them were smaller than the one we report above.

In order to make sure that our results are stable we tested several different values for $\epsilon_0$, various relaxation functions $f$ and space transformation functions $g$, and many separation margins from the boundary. In all cases we consistently obtained that the best local optimum found was below zero. Therefore, it provides a strong numerical support for the conjecture that the objective value of our optimization problem is negative, that is, property $\mathcal{P}$ holds.

We independently tested if the third phase (where $0.07n$ semi-random edges are sprinkled) is required for $f({\bm u})$ to be negative and bounded away from zero. Denote by $\hat{\bm u}$ the best solution for our original problem we found; it satisfies $f(\hat{\bm u})<0$. However, if $0.06n$ edges are added instead of $0.07n$, then the best solution that solver is able to find is a point ${\bm u}^*$ with $f({\bm u}^*)>0$. This time, all $[0,1]$-constrained variables in ${\bm u}^*$ turned out to be in the interval $[0.0358, 0.8691]$.
We also checked the relationship between points $\hat{\bm u}$ and ${\bm u}^*$. Both points are very close to each other ($\|{\bm u}^*-\hat{\bm u}\|_{\infty}=0.0024$), which means that the results are stable. Having said that, they are clearly not identical as changing the number of random edges added during the third phase affects the constraints of our optimization problem. In particular, point ${\bm u}^*$ is not feasible for the process involving adding $0.07n$ random edges.

\section{Lower bound}\label{sec:lower_bound}

As it was done in the argument for an upper bound, it will also be convenient to work with the directed graph $D_t$ underlying $G_t$. For each edge $u_tv_t$ that is added to $G_t$ at time $t$, we put a directed edge from $v_t$ to $u_t$ in $D_t$ (recall that $u_t$ is a random vertex selected by the semi-random graph process and $v_t$ is a vertex selected by the player). The existing lower bound for $\time$ that was observed in~\cite{process1} follows from the fact that in order to construct a Hamilton cycle, the player has to create a graph with minimum degree at least 2. However, this trivial necessary condition alone requires $(\ln 2+\ln(1+\ln2) + o(1)) n$ steps. Indeed, in order to reach a graph with minimum degree 2, the player has to play greedily during the first part of the game by selecting vertices of $G_t$ that are of degree 0. This part of the game ends at step $(\ln 2 + o(1))n$ a.a.s. From that point on, she continues playing greedily by selecting vertices of degree 1 which requires additional $(\ln(1+\ln2) + o(1)) n$ steps a.a.s. 

In order to improve the lower bound (unfortunately, only by a hair) we will use another trivial observation. We will call a vertex $x$ in $D_t$ \textbf{problematic} if it is of in-degree at least 3 (out-degree of $x$ is not important) with the in-neighbours $y_1, y_2, y_3$ (if $x$ has in-degree larger than 3, then these are the \emph{first} three in-neighbours sorted by the time when they were added to the graph), each of them of out-degree 1 and in-degree 1. Since $y_i$'s are of degree 2 in the underlying graph $G_t$, the three edges $y_i x$ must be included in a potential Hamilton cycle but then, indeed, vertex $x$ creates a problem. It gives us another trivial necessary condition: if $G_t$ has a Hamilton cycle, then there are no problematic vertices. Indeed, if $G_t$ has a vertex $v$ adjacent to three vertices, all of which are of degree 2, then $G_t$ cannot be Hamiltonian. This results in various types of ``problematic'' vertices. Our definition focuses only on a particular type for the purpose of simplifying the proof.

The numerical improvement is tiny and the bound we prove is certainly not tight. Hence, we only provide sketches of the proofs. The computations presented in the paper were performed by using Maple~\cite{Maple}. The worksheets can be found at the following address~\cite{Pawel}. 

\medskip

For convenience, we will distinguish a few phases in the semi-random graph process. The first phase lasts exactly $n \ln 2$ steps. Our first goal is to show that if the player plays greedily, then a.a.s.\ there will be linearly many problematic vertices at the end of first phase. 

\begin{claim}\label{claim:1}
Suppose that the player plays greedily during the first phase of the process. Then, a.a.s.\ there are $(\xi+o(1))n$ problematic vertices at the end of this phase, where
$$
\xi = \frac {1}{128} \left( 4(\ln 2)^4 + 20 (\ln 2)^3 + 54 (\ln 2)^2 - 18 \ln 2 - 21\right) \approx 0.0004035.
$$
\end{claim}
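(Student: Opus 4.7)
The plan is to compute $\ex[X]$, where $X$ is the number of problematic vertices, via a fluid-limit/Poisson approximation, and then conclude $X=(\xi+o(1))n$ a.a.s.\ by the second-moment method.

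Under greedy play, the fraction $Z(t)/n$ of degree-$0$ vertices obeys the differential-equation fluid limit $z(\tau)=2e^{-\tau}-1$ (scaled time $\tau=t/n$), so $z(\tau)>0$ on $[0,\ln 2)$ and greedy play is feasible a.a.s.\ throughout the phase. Once a vertex $y$ is selected by the player it has positive degree, hence will never be selected by the player again, so it acquires out-degree exactly $1$ and its in-degree at the end of phase~$1$ equals the number of later steps $t$ with $u_t=y$. By symmetry, $\ex[X]=n\,\pr(x_0 \text{ is problematic})$ for any fixed $x_0$. The plan is to condition on the steps at which $u_t=x_0$: these form a Bernoulli process that, rescaled, converges to a rate-$1$ Poisson process on $[0,\ln 2]$, and the first three hit times $(\tau_1,\tau_2,\tau_3)$ have joint density $e^{-\tau_3}$ on $\{0<\tau_1<\tau_2<\tau_3<\ln 2\}$.

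At each hit time $T_i=\tau_i n$ the player picks some degree-$0$ vertex $y_i\ne x_0$, which is thereby activated; so $y_i$ is necessarily a first in-neighbour of $x_0$, in the correct order. Since $y_i$ was never hit before $T_i$, its final in-degree equals the number of hits on $y_i$ in $(T_i,n\ln 2]$, asymptotically Poisson$(\ln 2-\tau_i)$, and so the probability that $y_i$ ends up in state $(1,1)$ is asymptotically $(\ln 2-\tau_i)\,e^{-(\ln 2-\tau_i)}$. Because $y_1,y_2,y_3$ are distinct vertices whose hit counts in disjoint future windows are asymptotically independent, the three single-hit events are asymptotically independent, giving
\[
\pr(x_0 \text{ is problematic}) \sim \int\!\!\int\!\!\int_{0<\tau_1<\tau_2<\tau_3<\ln 2} e^{-\tau_3} \prod_{i=1}^3 (\ln 2-\tau_i)\,e^{-(\ln 2-\tau_i)}\, d\tau_1\, d\tau_2\, d\tau_3.
\]
Direct evaluation, most cleanly after the substitution $u_i=\ln 2-\tau_i$ followed by iterated integration by parts, reduces the triple integral to the polynomial $\xi=(4a^4+20a^3+54a^2-18a-21)/128$ with $a=\ln 2$, matching the claim.

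Concentration then follows by a standard second-moment calculation: for distinct $x\ne x'$, the events of being problematic depend on essentially disjoint sets of random choices (the small overlap---shared first-three in-neighbours or coincident hit times---contributes only $o(n^2)$ to $\ex[X^2]$), so $\pr(x,x' \text{ both problematic}) = (1+o(1))\pr(x\text{ problematic})^2$, giving $\var(X)=o(n^2)$ and thus $X=(\xi+o(1))n$ a.a.s.\ by Chebyshev. The main technical hurdle is making the Poisson and asymptotic-independence claims precise at finite $n$: conditioning on a few prescribed values of $u_t$ slightly perturbs the distributions of the remaining $u_t$'s and of the surviving degree-$0$ supply, and the player's tie-breaking rule among degree-$0$ vertices must be tracked. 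These effects are only $O(1/n)$ perturbations and do not alter the leading-order count, but the formal bookkeeping---combining concentration of $Z(t)$ around its fluid limit with a coupling argument that decouples the evolutions of $y_i$ and $x_0$---is where the bulk of the work lies.
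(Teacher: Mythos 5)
Your proposal is correct and evaluates to the same constant $\xi$, but it takes a genuinely different route from the paper. The paper attacks the problem head-on with the differential equation method: it classifies each vertex of in-degree $\geq 1$ into one of nine types (indexed by the in-degree pattern of its first one/two/three in-neighbours), tracks a tenth count of ``neglected'' vertices, writes down the corresponding system of ten coupled ODEs, solves it explicitly for the type-$(1,1,1)$ density $x_{111}(x)$, and reads off $\xi=x_{111}(\ln 2)$; concentration comes for free from Wormald's theorem. Your approach instead fixes a single vertex $x_0$, computes $\pr(x_0\text{ problematic})$ directly by conditioning on the (approximately Poisson) hit times $\tau_1<\tau_2<\tau_3<\ln 2$ of $x_0$, and uses the fact that each $y_i$, once activated as a degree-$0$ vertex at time $\tau_i$, accumulates Poisson$(\ln 2-\tau_i)$ future in-hits and is never re-selected by the greedy player; this yields the triple integral $\int\!\!\int\!\!\int e^{-\tau_3}\prod_i(\ln 2-\tau_i)e^{-(\ln 2-\tau_i)}$, which indeed equals $(4a^4+20a^3+54a^2-18a-21)/128$ with $a=\ln 2$, and concentration is then supplied separately by a second-moment bound. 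The trade-off is clear: your route is more elementary and makes the answer transparently interpretable as a single probability integral, but it shifts the burden onto justifying the Poisson limits and the asymptotic independence of the three $y_i$-events (including correctly handling the player's degree-$0$ supply near the endpoint $\ln 2$ and the small dependencies created by conditioning on $u_t=x_0$); the DE method bundles all of that bookkeeping, including concentration, into a single machine at the cost of solving a larger (but still linear, triangular) ODE system. Both are sketches of the same order of rigour, consistent with the paper's stated intent in this section. One small wording issue worth fixing: the windows $(\tau_i,\ln 2]$ over which the $y_i$ accumulate in-hits are nested, not disjoint; what is actually disjoint (and what makes the asymptotic-independence claim true) is the set of target vertices $\{y_1,y_2,y_3\}$, since multinomial allocation of hits across distinct vertices factorizes in the Poisson limit.
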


\begin{proof} 
It is fairly easy to show that the number of problematic vertices is a.a.s.\ at least $\xi n$ for some positive constant $\xi$. By the standard first and second moment calculations, after the first $(\ln 2/2)n$ steps there will be at least $(e^{-c} c^3/6) n$ vertices of in-degree at least 3 in $D_t$ where $c=\ln 2/2$. Then, a.a.s.\ a positive fraction of these vertices turns out problematic during the next $(\ln 2/2)n$ steps. Of course, in order to get larger constant $\xi$ it is best to track the process and apply the differential equation's method (see~\cite{DE} for more information on
the DE's method). We briefly sketch the argument.

For $a,b,c \in \{0, 1\}$ and $a \ge b \ge c$, we will say that a vertex $x$ in $D_t$ is of \textbf{type $(a,b,c)$} if it is of in-degree at least 3, with the first three in-neighbours $y_1, y_2$ and $y_3$ (order is not important), each of which has out-degree 1 and in-degree $a$, $b$, and $c$, respectively. In particular, vertex of type $(1,1,1)$ is simply a problematic vertex. Similarly, vertices of in-degree 2 could be of \textbf{type $(a,b)$} and vertices of in-degree 1 could be of \textbf{type $(a)$}. The remaining vertices of in-degree at least 1 are called \textbf{neglected}. (Note that neglected vertices can still prevent Hamilton cycle to be constructed but we simply neglect them.)

In order to analyze the process, we need to keep track of 9 random variables associated with vertices of different types, random variables $X_{abc}$, $X_{ab}$, and $X_{a}$. In particular, $X_{111}(t)$ is the number of problematic vertices (type $(1,1,1)$) at the end of step $t$. Moreover, let $Y(t)$ be the number of neglected vertices at the end of step $t$.  It is straightforward to compute the conditional expectations; for example, 
$$
\ex \Big( X_{111}(t+1)-X_{111}(t) ~~|~~ D_t \Big) = \frac {X_{110}(t)}{n} - 3 \ \frac{X_{111}(t)}{n}.
$$
Indeed, the only chance to create a problematic vertex is when the semi-random process selects the in-neighbour of a vertex of type $(1,1,0)$ that is of in-degree 0. On the other hand, if the process selects any of the first three in-neighbours of a problematic vertex, this vertex becomes neglected. The other expectations can be computed in a similar way. This suggests the following system of differential equations that should reflect the behaviour of the corresponding random variables:
\begin{eqnarray*}
x_{0}'(x) &=& 1 - x_{0}(x) - x_{00}(x) - x_{000}(x) - x_{1}(x) - x_{10}(x) - x_{100}(x) - x_{11}(x) \\
&&- x_{110}(x) - x_{111}(x) - y(x) - 2x_{0}(x), \\
x_{00}'(x) &=& x_{0}(x) - 3x_{00}(x), \\
x_{000}'(x) &=& x_{00}(x) - 3x_{000}(x), \\
x_{1}'(x) &=& x_{0}(x) - 2x_{1}(x), \\
x_{10}'(x) &=& 2x_{00}(x) + x_{1}(x) - 3x_{10}(x), \\
x_{100}'(x) &=& 3x_{000}(x) + x_{10}(x) - 3x_{100}(x), \\
x_{11}'(x) &=& x_{10}(x) - 3x_{11}(x), \\
x_{110}'(x) &=& 2x_{100}(x) + x_{11}(x) - 3x_{110}(x),\\ 
x_{111}'(x) &=& x_{110}(x) - 3x_{111}(x), \\
y'(x) &=& x_{1}(x) + x_{10}(x) + x_{100}(x) + 2x_{11}(x) + 2x_{110}(x) + 3x_{111}(x),
\end{eqnarray*}
with the initial condition that all functions at $x=0$ are equal to zero. This system of equations can be explicitly solved. In particular, we get that 
$$
x_{111}(x) = \frac{e^{-3x}x^4}{4} + \frac {5 e^{-3x} x^3}{4} + \frac {27 e^{-3x} x^2}{8} + \frac {39 e^{-3x}x}{8} + \frac {39 e^{-3x}}{16} - 3 e^{-2x} x - 3 e^{-2x} + \frac {9 e^{-x}}{16}.
$$
It follows from the DE's method that a.a.s.\ $X_{111}(t) = (1+o(1)) x_{111}(t/n) n$ for any $0 \le t \le n \ln 2$. Hence, a.a.s.\ the number of problematic vertices at the end of the first phase is equal to $(1+o(1)) x_{111}(\ln 2)$ and the claim holds.
\end{proof}

The above claim implies that if the player concentrates on achieving minimum degree 2 as soon as possible (that is, play greedily until the graph has minimum degree equal to 2), then a.a.s.\ there will be $(\xi+o(1)) n$ problematic vertices at the end of the first phase. If she continues playing greedily, then a.a.s.\ some positive fraction of these problematic vertices will remain present in the graph. Making them negligible will take linearly many steps. As a result, the player might want to adjust her strategy and not play greedily but start paying attention to problematic vertices instead. We now argue that this will also slow her down.

For a given $\delta \in [0,1]$ ($\delta=\delta(n)$ could be a function of $n$), let $\mathcal{F}_{\delta}$ be a family of strategies in which $(1-\delta) n \ln 2$ steps in the first phase are greedy (that is, the player selects some isolated vertex) but $\delta n \ln 2$ steps are non-greedy (that is, the player selects some vertex of degree at least 1).  We will show that playing non-greedily has a penalty in the form of reaching minimum degree 2 later in comparison to the minimum degree 2 process.

\begin{claim}\label{claim:2}
Fix any $\delta \in [0,1]$. For any strategy from family $\mathcal{F}_{\delta}$, a.a.s.\ it takes at least 
$$
(\ln 2 + \ln(1+\ln 2) + \eps_1(\delta) + o(1)) n 
$$
steps for $G_t$ to reach minimum degree 2, where 
$$
\eps_1(\delta) =
\ln \left( (2^{1 + \delta} - 1) \ln(2^{1 + \delta} - 1) - 2^{1 + \delta} \delta \ln 2 + (1 + \ln 2) 2^{\delta} \right) - \delta \ln 2  - \ln(1 + \ln 2),
$$ 
for $\delta\in[0,1/2]$ and $\eps_1(\delta)=\eps_1(1/2)$ for $\delta\in(1/2,1]$.
\end{claim}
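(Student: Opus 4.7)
The plan is to analyze the first phase as a control problem on the rescaled vertex-type counts. First I would reduce to strategies that never pick $v_t$ of degree $\ge 2$: writing $Z = 2I + D_1$, picking $v_t$ of degree $0$ or $1$ contributes an extra $-1$ to $\ex[\Delta Z]$ relative to picking one of degree $\ge 2$, so any wasteful move can be replaced by a play on a degree-$1$ vertex while remaining in $\mathcal{F}_\delta$ and only speeding the process up. Hence ``non-greedy'' may be taken to mean ``play on a degree-$1$ vertex''.

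Let $g(x) \in [0,1]$ denote the greedy density at rescaled time $x = t/n$. Wormald's differential-equations method~\cite{DE} then yields, a.a.s.\ with $o(1)$ error,
\begin{align*}
    i'(x) &= -g(x) - i(x), \\
    d_1'(x) &= 2g(x) - 1 + i(x) - d_1(x),
\end{align*}
with $i(0)=1$, $d_1(0)=0$, and the phase-1 budget $\int_0^{\ln 2} g(x)\,dx = (1-\delta)\ln 2$; beyond $x = \ln 2$ the budget is dropped. The stopping time is the smallest $T$ with $i(T) = d_1(T) = 0$, and a lower bound on $T$ over all admissible schedules gives the claim.

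For $\delta \in [0,1/2]$ the optimal schedule is greedy-first: $g(x) = 1$ on $[0, (1-\delta)\ln 2]$ and $g(x) = 0$ on $((1-\delta)\ln 2, \ln 2]$, followed in phase~2 by greedy on $i$ until $i = 0$ and greedy on $d_1$ thereafter. This can be verified via Lagrangian duality on the budget constraint: using the explicit formula $i(x) = e^{-x}\bigl(1 - \int_0^x g(s)e^s\,ds\bigr)$, the total time is $\ln 2 + \ln \Phi(A,B)$ where $A = \int_0^{\ln 2} g(s)e^s\,ds$ and $B = \int_0^{\ln 2} g(s)s e^s\,ds$, and the pointwise optimality condition is $g(s) = 1$ on a sublevel set of $s \mapsto e^s(s - \ln(3-A))$. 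For $\delta \le 1/2$ one checks that $A \ge \sqrt{2} - 1 > 3-e$ at the candidate, so $\ln(3-A) < 1$ and this function is strictly increasing on $[0,\ln 2]$, making the sublevel set an initial interval. Solving the ODE piecewise under greedy-first yields $i(\ln 2) = 1 - 2^{-\delta}$ and $d_1(\ln 2) = 2^{-\delta} - 1 + \ln 2 - \delta(\ln 2)2^{-\delta}$; the two phase-2 subphases then add $\ln(2^{1+\delta}-1) - \delta\ln 2$ and $\ln(1 + d_1^\star)$ where $d_1^\star$ is the value of $d_1$ at the instant $i$ hits $0$, and simple algebra collapses the total to $(1-\delta)\ln 2 + \ln\!\bigl((2^{1+\delta}-1)\ln(2^{1+\delta}-1) - 2^{1+\delta}\delta\ln 2 + (1+\ln 2)2^\delta\bigr) = \ln 2 + \ln(1+\ln 2) + \eps_1(\delta)$.

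For $\delta > 1/2$ the greedy-first schedule becomes infeasible: $d_1$ would drop to $0$ strictly before $x = \ln 2$, so the player is forced into plays that, by the reduction of the first paragraph, only slow her down. I would then invoke the bound $\eps_1(1/2)$ as a valid (non-tight) lower bound, since converting some of the excess non-greedy moves of any $\mathcal{F}_\delta$ strategy back into greedy ones produces a $\mathcal{F}_{1/2}$ strategy that is no slower. The main obstacle is the optimality argument in the third paragraph: the admissible set is constrained both by the integral budget and implicitly by the feasibility condition $d_1(x) > 0$, so the Lagrangian stationary-point analysis needs to be supplemented either by a direct pointwise exchange argument (swapping an infinitesimal slice of non-greedy at time $x_1$ with greedy at a later time $x_2$ is shown to worsen $T$) or by a convexity check of $\Phi$ along the admissible curve; the self-consistent dependence of the Lagrangian multiplier on $A$ makes the bookkeeping delicate, but does not obstruct the argument for $\delta \le 1/2$.
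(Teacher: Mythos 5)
Your overall framework matches the paper's: reduce to strategies that play only on degree-$0$ or degree-$1$ vertices, track the degree-$0$ and degree-$1$ counts with a pair of ODEs via the differential-equations method, argue that the greedy-first schedule is optimal, and solve the resulting system piecewise. Your DE system, the values $i(\ln 2) = 1-2^{-\delta}$ and $d_1(\ln 2) = 2^{-\delta}-1+\ln 2 - \delta(\ln 2)2^{-\delta}$, and the algebra collapsing the total time to $(1-\delta)\ln 2 + \ln\bigl[(2^{1+\delta}-1)\ln(2^{1+\delta}-1)-2^{1+\delta}\delta\ln 2 + (1+\ln 2)2^{\delta}\bigr]$ all check out, as does the monotonicity reduction you use for $\delta>1/2$. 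Where you differ is the optimality-of-greedy-first step: the paper asserts (in a footnote) a stochastic-domination coupling between an arbitrary schedule and its greedy-first rearrangement, while you replace this with a first-order/variational analysis of the fluid control problem. The worry you voice at the end about the Lagrangian bookkeeping is in fact resolvable: the quantity inside the logarithm is $\Psi(A,B) = (2+\ln 2) + (1-\ln 2)A + B + (3-A)\ln\bigl(\tfrac{3-A}{2}\bigr)$, and one has $\partial_A\Psi = -\ln(3-A)$, $\partial_A^2\Psi = 1/(3-A)>0$, with $\Psi$ affine in $B$, so $\Psi$ is jointly convex in $(A,B)$; since $(A,B)$ ranges over an affine image of the convex set of admissible controls $g$, the minimization is a convex program, and the pointwise first-order condition you derive (with $c(s)=e^{s}\bigl(s-\ln(3-A^{*})\bigr)$ increasing at the candidate, hence a bang-bang greedy-first solution) is sufficient, not merely stationary. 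The implicit feasibility constraint $d_1>0$ only needs to be verified for the candidate greedy-first trajectory when $\delta\le 1/2$, because dropping it only widens the feasible set and thus preserves the lower bound. So your variational route is a valid, slightly more explicit alternative to the paper's (unproven but plausible) coupling; it buys a checkable optimality certificate at the cost of more calculation, whereas the paper's coupling is conceptually lighter but left informal.
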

Note that $\eps_1(\delta)$ is an increasing function of $\delta$ on $[0,1/2]$ and $\eps_1(0)=0$ (which corresponds to the original minimum degree 2 process).

\begin{proof} 
It is important to notice that the objective here is only to eliminate all vertices of degree below 2, and thus the player does not need to worry about problematic vertices. First consider $\delta\in[0,1/2]$.
As in the case of the unrestricted minimum degree 2 process (which corresponds to $\delta=0$), it is straightforward to see (for example, by a simple coupling argument) that it is always beneficial to play a greedy move instead of a non-greedy one\footnote{For any strategy $\bf{f}$ of $\scr{F}_{\delta}$ which does not prioritize greedy moves first, there exists another strategy within $\scr{F}_{\delta}$ which \textit{does} prioritize greedy moves first, and whose completion time is stochastically dominated by the completion time of $\bf{f}$.}. Hence, in order to achieve our goal, the best strategy from the family $\mathcal{F}_{\delta}$ is to play on vertices of degree 0 during the first $(1-\delta)n \ln 2$ steps. After that, the player should select vertices of degree 1 until the end of the first phase , that is, during the following $\delta n \ln 2$ steps. As there are no restrictions on the game after that (in particular, no restrictions on the number of non-greedy moves), she should play greedily until the end of the game; that is, play on vertices of degree 0 until they disappear and then play on vertices of degree 1 until the end of the game. Hence, both the first and the second phase are split into two sub-phases, depending on which type of vertices are selected.

In order to analyze how long it takes to finish this process, we need to keep track of two random variables: $Y(t)$ and $Z(t)$, the number of vertices at time $t$ of degree 0 and 1, respectively. We say that a move is of \textbf{type $i$} (where $i \in \{0,1\}$) if the player plays on a vertex of degree $i$. It is not difficult to see that 
\begin{eqnarray*}
\ex \Big( Y(t+1) - Y(t) ~~|~~ G_t \text{ and type } i \Big) &=& - \delta_{i = 0} - \frac {Y(t)}{n} \\
\ex \Big( Z(t+1) - Z(t) ~~|~~ G_t \text{ and type } i \Big) &=& \delta_{i = 0} - \delta_{i=1} + \frac {Y(t)}{n} - \frac {Z(t)}{n}. 
\end{eqnarray*}
where $\delta_A$ is the Kronecker delta function ($\delta_A=1$ if $A$ is true and $\delta_A=0$ otherwise). The corresponding system of DEs is
\begin{eqnarray*}
y'(x) &=& - \delta_{i = 0} - y(x) \\
z'(x) &=& \delta_{i = 0} - \delta_{i=1} + y(x) - z(x). 
\end{eqnarray*}
The initial condition is $y(0)=1$ and $z(0)=0$. Moreover, the final values of $y(x)$ and $z(x)$ after one of the sub-phases are used as the initial values for the next sub-phase. The conclusion follows from the DE's method. We skip the details and refer the interested reader to the Maple worksheets available on-line. 

It is easy to see that if $1/2<\delta\le 1$ then any strategy from ${\cal F}_{\delta}$ a.a.s.\ takes at least $(\ln 2+\ln(1+\ln 2) +\eps_1(1/2)+o(1))n$ steps to build a graph with minimum degree at least 2. During the second sub-phase of phase 1, the player may select any non-isolated vertex if there are no vertices of degree 1 left. These moves are not helping with building a graph with minimum degree 2 and thus it takes even longer to complete the process. 
\end{proof}

Our next task is to estimate the number of problematic vertices at the end of the first phase, provided that the player uses a strategy from family $\mathcal{F}_\delta$. 

\begin{claim}\label{claim:3}
Fix any $\delta \in [0,\xi/(2 \ln 2)]$, where $\xi$ is defined in Claim~\ref{claim:1}. For any strategy from family $\mathcal{F}_{\delta}$, a.a.s.\ there are at least $(\xi-2\delta \ln 2+o(1))n$ problematic vertices at the end of the first phase.
\end{claim}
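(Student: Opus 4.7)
The plan is to couple the given $\mathcal{F}_\delta$ strategy with the fully greedy strategy $\mathcal{F}_0$ analyzed in Claim~\ref{claim:1}, using the same random sequence $(u_t)$. Since the greedy strategy yields $(\xi + o(1))n$ problematic vertices a.a.s., it suffices to bound the ``loss'' due to the $\delta n \ln 2$ non-greedy moves: I aim to show that each non-greedy move costs at most $2$ problematic vertices, so the cumulative loss is at most $(2 \delta \ln 2 + o(1)) n$, matching the claim.

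The single-step accounting would go as follows. A non-greedy move at step $t$ selects some $v_t$ of degree at least $1$ instead of an isolated vertex. First, this can destroy at most one currently problematic vertex: if $v_t$ has out-degree $1$ before the move, then $v_t$ is an in-neighbor of exactly one vertex, and bumping its out-degree to $2$ disqualifies at most one problematic vertex from the count. Second, relative to the greedy counterpart, the move forgoes the chance to ``consume'' a fresh out-degree-$0$ vertex which could later have served as one of the first three in-neighbors of some $u_{t'}$ and thereby contributed to at most one prospective problematic vertex. Adding these two contributions yields a per-step cost of at most $2$, which is where the factor $2\ln 2$ in the claim ultimately comes from.

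To make this rigorous, I would either (a) set up a Doob martingale tracking the difference in problematic-vertex counts between the coupled processes and apply Azuma-Hoeffding to obtain concentration, or (b) adapt the differential-equation analysis of Claim~\ref{claim:1} to the $\mathcal{F}_\delta$ setting, interpreting the $\delta$ fraction of non-greedy moves as a bounded perturbation and using a Gronwall-type estimate to control the drift of the trajectory of $x_{111}$ at $x = \ln 2$. Approach (b) avoids having to describe an explicit coupling in full detail and should yield the cleanest argument; the DEs would be the same as in Claim~\ref{claim:1} on greedy steps but modified on non-greedy steps, with the modification contributing at most $2$ to the derivative of the ``loss'' process.

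The main obstacle is that after the first non-greedy move the two coupled processes no longer share the same graph state, so all subsequent moves act on different structures. This means the ``$2$ per step'' bound cannot simply be read off locally; one must control how the divergence propagates through later greedy steps. In the ODE formulation, this shows up as the Lipschitz dependence of the DE system on the perturbation, and one needs to verify that the Lipschitz constant is benign enough that the accumulated drift at $x = \ln 2$ is at most $2 \delta \ln 2$. The constraint $\delta \le \xi/(2 \ln 2)$ ensures that the resulting lower bound $\xi - 2 \delta \ln 2$ remains non-negative, which is exactly what will be needed in the subsequent argument.
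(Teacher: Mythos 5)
Your plan is a genuine departure from the paper's proof, and as written it has a gap precisely at the spot you flag as the ``main obstacle.'' The paper does not do an online coupling at all. It converts the problem into an \emph{offline} one: run the all-greedy process to the end of phase~1, obtaining $(\xi+o(1))n$ problematic vertices, and then allow the player, with full hindsight, to re-assign the tail $v_t$ of any $\delta n\ln 2$ of the $n\ln 2$ directed edges while the other edges stay literally frozen. Re-assigning a single edge $v^{\mathrm{old}}\to u_t$ to $v^{\mathrm{new}}\to u_t$ alters two quantities: it removes $v^{\mathrm{old}}$ from $u_t$'s in-neighbourhood (which can unmake $u_t$ as a problematic vertex) and it raises $v^{\mathrm{new}}$'s out-degree by one (which can unmake the unique problematic vertex for which $v^{\mathrm{new}}$ was a $y_i$). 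Hence each modification kills at most two problematic vertices, giving a loss of at most $2\delta n\ln 2$. Because this offline ``rewire $\delta n\ln 2$ edges'' game is at least as powerful as any on-line $\mathcal{F}_{\delta}$ strategy, the bound transfers. The counting is trivial because one is comparing two static edge sets that differ in $\delta n\ln 2$ positions, so no divergence ever propagates.

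Your proposal instead tries to compare the evolving $\mathcal{F}_{\delta}$ process to the evolving greedy process step by step and then control the accumulated divergence via Azuma or a Gronwall-type estimate on the perturbed DE. The weakness is exactly the sentence where you defer: ``one needs to verify that the Lipschitz constant is benign enough that the accumulated drift at $x=\ln 2$ is at most $2\delta\ln 2$.'' A generic Gronwall bound yields a loss of order $e^{L\ln 2}\cdot\delta n$ for the system's Lipschitz constant $L$, and nothing in the DE system of Claim~\ref{claim:1} makes $e^{L\ln 2}\le 2/\!\ln 2\approx 2.89$ obvious; the drift of $x_{111}$ depends on $x_{110}$, which depends on $x_{100}$ and $x_{11}$, and so on, so perturbations can in principle compound through the chain. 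Your ``per-step cost of $2$'' heuristic also conflates two different comparisons (the instantaneous loss in the $\mathcal{F}_\delta$ graph versus the forgone future contribution in the greedy graph), which is another symptom of trying to make a local bound do global work across two processes that no longer share state. The paper's rewiring argument sidesteps all of this, and if you want to rescue your approach you would need to exhibit a coupling under which the two processes differ by at most $O(\delta n)$ edges at every time up to $n\ln 2$, which is essentially re-deriving the paper's offline argument in disguise.
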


\begin{proof}
It is not clear what the best strategy for minimizing the number of problematic vertices is. So, in order to keep the argument as simple as possible, we will help the player and propose to play the following auxiliary game, a mixture of on-line and off-line variants of the game. We simply run the greedy algorithm by selecting an isolated vertex in each step of the process. It follows from Claim~\ref{claim:1} that a.a.s.\ there are $(\xi+o(1))n$ problematic vertices at the end of the first phase. After that, we ask the player to `rewind' the process and carefully `rewire' $\delta$ fraction of moves in any way she wants keeping the remaining $1-\delta$ fraction of moves greedy, as required. Each modified move affects at most two problematic vertices so the number of problematic vertices decreases by at most $2 \cdot \delta n \ln 2$. Since this task clearly is much easier for the player than the original one, the lower bound follows. 
\end{proof}

Our final task is to combine all results together.

\begin{claim}\label{claim:4}
Fix any $\delta \in [0,\xi/(2 \ln 2)]$, where $\xi$ is defined in Claim~\ref{claim:1}. For any strategy from family $\mathcal{F}_{\delta}$, a.a.s.\ it takes at least 
$$
(\ln 2 + \ln(1+\ln 2) + \eps_1(\delta) + \eps_2(\delta) + o(1)) n 
$$
steps for $G_t$ to reach minimum degree 2 and remove all problematic vertices that were created during the first phase. Function $\eps_1(\delta)$ is defined in Claim~\ref{claim:2} and 
\begin{eqnarray*}
\eps_2(\delta) &=& \frac {\ln \big( 3 \tau(\delta) + 1 \big)}{3}, \\ 
\tau(\delta) &=& (\xi - 2\delta \ln 2)\ \exp(-3 \ln (1+\ln 2) - 3 \eps_1(\delta)). 
\end{eqnarray*}
\end{claim}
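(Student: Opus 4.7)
The plan is to combine Claims~\ref{claim:2} and~\ref{claim:3} with a differential-equations analysis of the count of remaining phase-$1$ problematic vertices. Fix any strategy in $\mathcal{F}_\delta$, write $T_1 = n\ln 2$ for the end of phase~$1$, and let $P(t)$ denote the number of problematic vertices (all created during the first phase) that are still problematic at time $t$. By Claim~\ref{claim:3}, a.a.s.\ $P(T_1) \geq p_0 n$ with $p_0 := \xi - 2\delta \ln 2 + o(1)$.

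The key structural observation is that a problematic vertex $x$ with distinguished in-neighbours $y_1, y_2, y_3$ (each of total degree~$2$) becomes non-problematic exactly when the degree of some $y_i$ rises above~$2$; this happens in a single step iff $u_t$ or $v_t$ equals one of the $y_i$. Classify each post-$T_1$ move as \emph{greedy}, when $v_t$ has degree $\leq 1$ (and hence cannot equal any $y_i$, which has degree~$2$), or as a \emph{kill}, when $v_t$ equals a $y_i$ of some still-problematic vertex. Summing over all problematic vertices, the expected drop in $P$ per step is at most $3P(t)/n$ from the random $u_t$ plus $1$ from a kill move. Writing $p(s) = P(sn)/n$, this gives in the scaled limit the differential inequality $dp/ds \geq -3p - f(s)$ with $f(s) \in [0,1]$ the indicator that step $sn$ is a kill.

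The second ingredient is Claim~\ref{claim:2}: any strategy in $\mathcal{F}_\delta$ that has reached minimum degree $2$ by some time $T$ must have performed at least $(\ln(1+\ln 2) + \eps_1(\delta) + o(1))n$ greedy moves after $T_1$. Setting $L = (T-T_1)/n$, this translates to the constraint $\int_0^L (1-f(s))\, ds \geq \ln(1+\ln 2) + \eps_1(\delta)$ in the scaled limit, while the boundary condition $p(L) = 0$ together with the ODE yields $\int_0^L e^{3s} f(s)\, ds = p_0$. Minimizing $L$ subject to these constraints is a simple linear program in $f$: since the weight $e^{3s}$ is increasing, it is optimal to reserve the initial interval $[0,\, \ln(1+\ln 2) + \eps_1(\delta)]$ for greedy moves (so $f = 0$) and set $f = 1$ on the remaining interval---the ``greedy-then-kill'' schedule. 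Substituting this profile gives $p = \tau(\delta)$ at the start of the kill phase and then $L - (\ln(1+\ln 2) + \eps_1(\delta)) = \tfrac{1}{3}\ln(3\tau(\delta) + 1) = \eps_2(\delta)$ to drive $p$ to $0$, which yields the claimed lower bound $T \geq (\ln 2 + \ln(1+\ln 2) + \eps_1(\delta) + \eps_2(\delta) + o(1))n$.

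The main obstacle will be making the ODE limit rigorous. This requires either the differential equations method (Wormald) or a direct Azuma--Hoeffding-style concentration argument to show that $P(t)/n$ a.a.s.\ tracks its ODE counterpart, together with a careful invocation of the proof of Claim~\ref{claim:2} to confirm that its lower bound on greedy steps survives when kill moves are interleaved arbitrarily with greedy ones (one can do this via a coupling which reshuffles all kill moves to the tail of the interval, reducing to the sequential analysis above). A minor technicality is that distinct problematic vertices may share $y$-neighbours, so a single $u_t$ hit could kill several at once; a first-moment calculation shows that a.a.s.\ only $o(n)$ such shared $y$-vertices arise, so the $\leq 3P(t)/n$ random-kill bound survives up to lower-order terms.
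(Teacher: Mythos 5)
Your LP reformulation is an appealing way to think about the problem, but it rests on a constraint that does not follow from Claim~\ref{claim:2} and is in fact false as a constraint on all strategies. You claim that Claim~\ref{claim:2} yields
\[
\int_0^L (1-f(s))\,ds \ge \ln(1+\ln 2)+\eps_1(\delta),
\]
i.e., that the number of \emph{greedy} moves after $T_1$ is at least $(\ln(1+\ln 2)+\eps_1(\delta))n$. Claim~\ref{claim:2} only bounds the \emph{total time} to reach minimum degree~$2$; it gives $L \ge \ln(1+\ln 2)+\eps_1(\delta)$, not a bound on $\int_0^L(1-f)$. These are genuinely different: during a kill move the random vertex $u_t$ still lands on degree-$\le 1$ vertices, so interleaved kill moves contribute free random hits that reduce how many greedy moves are needed. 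A player who plays a very long stretch of kill moves first has $\int_0^L(1-f)$ far below $\ln(1+\ln 2)+\eps_1(\delta)$ (she needs very few greedy moves at the end), violating your constraint, so your LP feasible region does not contain all realizable strategies, and its optimum is not a proven lower bound for $\time$. Your suggested fix---a coupling that reshuffles kill moves to the tail of the interval---does not obviously work either, because it decouples the random $u_t$'s that occur during kill steps from the concurrent evolution of the degree-$\le 1$ counts; after reshuffling, the greedy phase sees fewer random hits than it did in the interleaved schedule, so the number of greedy moves cannot be compared in the direction you need.

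The correct LP replaces your constraint by the exponential-weighted conditions coming from the degree-$0$/$1$ ODEs of Claim~\ref{claim:2}'s proof (weights $e^s$, paralleling your weight $e^{3s}$ for problematic vertices), and an exchange argument then shows that a greedy-then-kill schedule is optimal because $p$ decays at the larger rate $3$; that is what the paper gestures at. To be fair, the paper's own proof also leaves this exchange argument at the level of \emph{``it is straightforward to see that the best strategy is to continue playing the degree-greedy algorithm\ldots''}, so its rigor is comparable to yours---it simply takes the optimality of greedy-then-kill as read inside an auxiliary game in which the player is handed the degree-greedy strategy and the minimum possible count of problematic vertices for free, rather than deriving it from an LP. Your calculation of $\tau(\delta)$ and $\eps_2(\delta)$ once the greedy-then-kill schedule is fixed is correct and matches the paper's, and your observation about shared $y$-neighbours is handled correctly (the random-hit rate is exactly $3P(t)/n$ by linearity, so the first-moment bound is only needed to control the player's kill moves, not $u_t$). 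The gap to close is precisely the schedule-optimality step, which neither Claim~\ref{claim:2} nor a naive reshuffling coupling delivers.
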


\begin{proof}
As in the proof of the previous claim, it is not clear what the best strategy is. Since we aim for an easy argument without optimizing the constants, we propose the player to play the following auxiliary game. We let her play the degree-greedy algorithm from the family $\mathcal{F}_\delta$ which optimizes the time needed to achieve minimum degree 2 (without worrying about problematic vertices). At the end of the first phase we artificially `destroy' some problematic vertices (if needed), leaving only $(\xi-2\delta \ln 2+o(1)) n$ of them in the graph. Clearly, this is an easier game for the player to play. Indeed, by Claim~\ref{claim:3} any strategy from $\mathcal{F}_\delta$ creates at least that many problematic vertices and so this is certainly a sweet deal for her. 

The player continues the game trying to reach minimum degree at least 2 and to destroy the remaining problematic vertices. It is straightforward to see that the best strategy is to continue playing the degree-greedy algorithm, destroying the remaining isolated vertices before playing vertices of degree 1. That part is taking $(\ln(1+\ln 2)+\eps_1(\delta)+o(1))n$ steps by Claim~\ref{claim:2}. In the meantime, vertices selected by the random graph process land on the neighbours of problematic vertices. The probability that a given problematic vertex is not destroyed is equal to 
$$
\left( 1 - \frac {3}{n} \right)^{(\ln(1+\ln 2)+\eps_1(\delta)+o(1))n} = \exp \Big( - 3 \big( \ln(1+\ln 2)+\eps_1(\delta) \big) \Big) + o(1).
$$
Hence a.a.s.\ there are $(\tau(\delta)+o(1))n$ problematic vertices at this point.

After that, the player has to destroy the remaining problematic vertices. Obviously, the best strategy is to choose $v_t$ to be one of the first three neighbours of a problematic vertex. A problematic vertex $x$ can also be destroyed if $u_t$ happens to be one of these neighbours. Let $Y(t)$ be the number of problematic vertices at the end of step $t$ (for simplicity counting from $t=0$). It is straightforward to see that 
$$
\ex \Big( Y(t+1) - Y(t) ~~|~~ G_t \Big) = - 1 - \frac {3Y(t)}{n}. 
$$
The corresponding DE is $y'(x) = - 1 - 3y(x)$ with the initial condition $y(0)=\tau(\delta)$. It follows that $y(x) = -1/3 + (\tau(\delta) +1/3)e^{-3x}$ and so we get that a.a.s.\ it takes another $(\eps_2(\delta)+o(1))n$ steps to finish the game, and the claim holds.
\end{proof}

Theorem~\ref{thm:lower_bound} follows immediately from Claim~\ref{claim:4}. Let us first extend $\eps_2(\delta)$ to $[0,1]$ by setting $\eps_2(\delta)=0$ for $\delta\in(\xi/(2\ln 2),1]$. We have shown that for every $\delta\in[0,1]$, any strategy from ${\cal F}_{\delta}$ a.a.s.\ takes at least $(\ln 2+\ln(1+\ln 2)+\eps_1(\delta)+\eps_2(\delta)+o(1))n$ steps to build a Hamilton cycle.  Note that $\eps_1(\delta)$ is an increasing function of $\delta$; the more non-greedy moves the player needs to play, the longer the game is. On the other hand, $\eps_2(\delta)$ is a decreasing function on $[0,\xi/(2 \ln 2)]$ with $\eps_2(\xi/(2 \ln 2))=0$; the non-greedy moves can be spent on destroying problematic vertices and so the number of them decreases with $\delta$. After more careful investigation we get that $\eps_1(\delta) + \eps_2(\delta)$ is a decreasing function on $[0,\xi/(2 \ln 2)]$ and then it is equal to $\eps_1(\delta)$ and so it starts increasing. Therefore we get that
$$
\eps = \min_{\delta} \Big( \eps_1(\delta) + \eps_2(\delta) \Big) = \eps_1 \left( \frac {\xi}{2 \ln 2} \right) + \eps_2 \left( \frac {\xi}{2 \ln 2} \right) = \eps_1 \left( \frac {\xi}{2 \ln 2} \right) \approx 2.403 \cdot 10^{-8}.
$$

\end{document}